\documentclass[11pt]{amsart}

\usepackage{amssymb}
\usepackage{amsmath}
\usepackage{amsfonts}
\usepackage{graphicx}
\usepackage{amsthm}
\usepackage{enumerate}
\usepackage[mathscr]{eucal}
\usepackage{mathrsfs}
\usepackage{verbatim}
\usepackage{yhmath}

\usepackage{color}

\makeatletter
\@namedef{subjclassname@2010}{%
  \textup{2010} Mathematics Subject Classification}
\makeatother

\numberwithin{equation}{section}
\numberwithin{figure}{section}

\theoremstyle{plain}
\newtheorem{theorem}{Theorem}[section]
\newtheorem{lemma}[theorem]{Lemma}
\newtheorem{proposition}[theorem]{Proposition}

\theoremstyle{plain}

\theoremstyle{remark}
\newtheorem{remark}[theorem]{Remark}

\DeclareMathOperator{\supp}{supp}
\DeclareMathOperator{\xsupp}{x-supp}

\DeclareMathOperator{\sgn}{sgn}
\DeclareMathOperator*{\esssup}{ess\,sup}

\allowdisplaybreaks[4]

\begin{document}

\title
[Bipolynomial Roth theorems]{Two bipolynomial Roth theorems in $\mathbb{R}$}

\author{Xuezhi Chen}
\address{School of Mathematical Sciences\\
University of Science and Technology of China\\
Hefei, 230026\\ P.R. China}
\email{cxz2013@mail.ustc.edu.cn}

\author{Jingwei Guo}
\address{School of Mathematical Sciences\\
University of Science and Technology of China\\
Hefei, 230026\\ P.R. China}
\email{jwguo@ustc.edu.cn}

\author{Xiaochun Li}
\address{Department of Mathematics\\
University of Illinois at Urbana-Champaign\\
Urbana, IL, 61801\\USA}
\email{xcli@math.uiuc.edu}

\date{\today}

\thanks{J. G. is partially supported by the Fundamental Research Funds for the Central Universities (No. WK3470000013) and the NSFC Grant (No. 11571331). X. L. is supported by Simons fellowship in Math 2019-2020. }

\subjclass[2010]{42B20}

\keywords{Bipolynomial, Szemer\'{e}di theorem, oscillatory integral, $\sigma$-uniformity, fractional dimensions}

\begin{abstract}
We give two Roth theorems, related to the nonlinear configuration $x$, $x+P_1(t)$, $x+P_2(t)$ involving two polynomials, for sets in $\mathbb{R}$ of positive density and of fractional dimensions. The proof uses Fourier analysis.
\end{abstract}

\maketitle


\section{Introduction}\label{sec1}

The polynomial Szemer\'{e}di theorem of Bergelson and Leibman \cite{BL96} asserts that if $P_1$, \ldots, $P_m\in \mathbb{Z}[t]$ all have zero constant term, then any subset $S$ of $[N]:=[1,N]\cap \mathbb{N}$ containing no nontrivial polynomial progression
\begin{equation*}
x, x+P_1(t), \ldots, x+P_m(t)
\end{equation*}
satisfies $|S|=o(N)$. This gives a qualitative description on size of $S$ which contains the polynomial progression mentioned above. It becomes much more challenging and interesting to find quantitative bounds of $|S|$. Among many partial progresses are a pair of papers \cite{PP19, PP20} by Peluse and Prendiville recently, in which they obtained bounds for subsets of $[N]$ lacking the nonlinear Roth configuration $x$, $x+t$, $x+t^2$.

In the setting of real numbers, Bourgain \cite{Bourgain88} proved that given $\varepsilon>0$ and integer $d\geq 2$ there is a $\delta>0$ such that if $S$ is a measurable set of $[0, N]$, $|S|>\varepsilon N$, then there is a triple $x$, $x+t$, $x+t^d$ in $S$ with $t>\delta N^{1/d}$. Durcik, Guo and Roos \cite{DGR19} further studied nonlinear patterns $x, x+t, x+P(t)$ for a monic polynomial $P$ with $P(0)=0$ and $\deg(P) \geq 2$. In particular $P$ is allowed to have a linear term and $\delta$ depends only on $\varepsilon$, $\deg (P)$ and an upper bound of the $\ell^1$-sum of coefficients of $P$. Krause \cite{Krause1901} investigated the case when $P$ is a non-flat curve.

Our first result is the following theorem, related to the Roth configuration $x$, $x+P_1(t)$, $x+P_2(t)$ involving polynomials of different degrees,  for sets in $\mathbb{R}$ of positive density.

\begin{theorem}\label{thm1}
Let $P_i : \mathbb{R}\rightarrow \mathbb{R}$
$(i=1,2)$ be two polynomials satisfying $1\leq \deg(P_1)<\deg(P_2)$ and $P_1(0)=P_2(0)=0$. For any $\varepsilon>0$ there exists a $\delta=\delta(\varepsilon, P_1, P_2)$ with
\begin{equation}
\delta\geq \exp\left(-\exp\left(c\varepsilon^{-6}\right)\right)\label{s1-2}
\end{equation}
for some constant $c=c(P_1, P_2)>0$, such that, given any measurable set $S\subset [0,N]$ with $N>1$ and measure $|S|\geq \varepsilon N$, it contains a triplet
\begin{equation*}
x, x+P_1(t), x+P_2(t)
\end{equation*}
with $t>\delta N^{1/\deg(P_2)}$.
\end{theorem}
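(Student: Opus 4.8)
The plan is to count the configuration via a multilinear integral and extract a main term by a density-increment / $\sigma$-uniformity argument in the style of Bourgain's work on nonlinear Roth patterns. Normalize by rescaling so that $S\subset[0,1]$ (absorbing the factor $N^{1/\deg P_2}$ into $t$), write $f=\mathbf 1_S$, and consider the trilinear form
\[
\Lambda(f,f,f)=\int_{\mathbb R}\int_0^1 f(x)\,f\bigl(x+P_1(t)\bigr)\,f\bigl(x+P_2(t)\bigr)\,\psi(t)\,dt\,dx,
\]
where $\psi$ is a smooth cutoff localizing $t$ to a dyadic scale $\tau$ bounded below by $\delta$; it suffices to show $\Lambda(f,f,f)>0$ for some admissible $\tau$. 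On the Fourier side this becomes an integral of $\widehat f(\xi)\,\widehat f(\eta)\,\widehat f(-\xi-\eta)$ against the oscillatory multiplier $m_\tau(\xi,\eta)=\int e^{2\pi i(\xi P_1(t)+\eta P_2(t))}\psi(t/\tau)\,dt$. The first step is to isolate the ``main term'' $\Lambda_0$ obtained by replacing the full multiplier by its value near $(\xi,\eta)=(0,0)$ (equivalently, replacing $f(x+P_i(t))$ by $f(x)$ after averaging in $t$); since $\int_0^1 f\geq\varepsilon$ by hypothesis, one gets $\Lambda_0\gtrsim\varepsilon^3$ after choosing $\tau$ appropriately.

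The core is then to control the error $\Lambda-\Lambda_0$. Split into a low-frequency piece $|\xi|,|\eta|\lesssim\tau^{-\deg P_2}$ where the stationary-phase asymptotics for $m_\tau$ are benign, and a high-frequency piece. For the high-frequency piece, decay of the oscillatory integral $m_\tau$ (van der Corput estimates, using $1\leq\deg P_1<\deg P_2$ so that the phase $\xi P_1(t)+\eta P_2(t)$ genuinely oscillates unless both $\xi,\eta$ are small) gives a power saving, which one converts into an estimate in terms of a $\sigma$-uniformity norm of $f$: if $f$ has no large correlation with the relevant family of modulated curves, the high-frequency contribution is $o(\varepsilon^3)$. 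If instead $f$ is not $\sigma$-uniform, one performs a density increment — $f$ correlates with a function constant on a short interval (or on a suitable arithmetic-type structure), so $S$ has density $\varepsilon+c(\varepsilon)$ on a subinterval of length comparable to a power of $\varepsilon$ — and iterates. The iteration terminates after $O(\varepsilon^{-1})$ steps (density cannot exceed $1$), and tracking how the scale shrinks at each step produces the doubly-exponential lower bound \eqref{s1-2} for $\delta$, with the exponent $\varepsilon^{-6}$ coming from the interplay between the number of iterations and the polynomial loss in each application of the inverse theorem.

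The main obstacle I expect is the inverse step: showing that failure of the oscillatory/power-saving estimate forces a genuine density increment on an interval of controlled length. This is where the two polynomials of \emph{different} degrees matter — the degree gap is exactly what lets one run a van der Corput argument in $t$ to gain decay in the ``diagonal'' variable and thereby reduce a bilinear obstruction to a linear (single-interval) one, rather than being stuck with a genuinely two-dimensional obstruction. One must also be careful that the increment happens at a scale that is still $\gg\delta$ after rescaling, so that the induction closes; bookkeeping the admissible range of $\tau$ against the current density $\varepsilon$ is the delicate quantitative heart of the proof. The low-frequency error and the extraction of $\Lambda_0$ are comparatively routine stationary-phase computations.
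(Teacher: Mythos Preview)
Your proposal diverges from the paper's argument in its top-level structure. You propose a density-increment iteration: extract a main term, bound the error by a $\sigma$-uniformity norm of $f$, and if $f$ is not uniform, pass to a subinterval with increased density and repeat. The paper instead follows Bourgain's original scheme and avoids density increment altogether. After localizing $t$ to a dyadic scale $2^{-l}$, the paper decomposes the \emph{third} copy of $f$ as $f*\rho_{l'} + (f*\rho_{l''}-f*\rho_{l'}) + (f-f*\rho_{l''})$ for scales $l'<l<l''$. The high-frequency piece $I_3$ is controlled \emph{unconditionally} by a bilinear $L^2\times L^2\to L^1$ estimate (Proposition~\ref{s2-prop1}) with a power decay $2^{-m/16}$ in the frequency parameter; no uniformity hypothesis on $f$ enters here. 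The low-frequency piece $I_1$ is handled by the substitution $\omega=|P_1(t)|$, which turns $f(x+P_1(t))$ into a genuine convolution---this is the new wrinkle relative to Bourgain, since $P_1$ need not be linear---after which Bourgain's lemma gives $I_1''\gtrsim\varepsilon^3$. The middle piece $I_2$ is bounded by $\|f*\rho_{l''}-f*\rho_{l'}\|_2$. The resulting dichotomy is: either $I$ is already large at this scale, or one of a few $L^2$-differences $\|f*\rho_a-f*\rho_b\|_2$ exceeds $c\varepsilon^3$. By Plancherel the sum of squares of all such differences over a lacunary sequence of scales is $O(1)$, so the second alternative can occur for at most $O(\varepsilon^{-6})$ values of $k$; hence some scale $l_{k_0}$ with $k_0\lesssim\varepsilon^{-6}$ makes $I$ large, and since the scales $l_k$ grow geometrically this produces the double exponential in $\delta$. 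No iteration on the density ever takes place.

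The $\sigma$-uniformity technique does appear in the paper, but one level down: it is a tool \emph{inside} the proof of the bilinear estimate (Section~\ref{sec6-2}, the case $j-l\geq\Gamma_1$), applied to $\widehat g$ on a fixed interval against a family of stationary-phase exponentials, not to $f$ itself as a driver of density increment.

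The genuine gap in your sketch is exactly the inverse step you flag: converting failure of uniformity with respect to curve-modulated functions into a density increment on a subinterval of controlled length. For two polynomials of different degrees this is not a standard result; you would need a degree-lowering or PET-type argument to reduce the bilinear obstruction to a linear (interval) one, and you supply no mechanism for it. Your accounting of the exponent $\varepsilon^{-6}$ is also unsupported---in a density-increment scheme the step count is governed by the increment size, not by anything that naturally produces a sixth power. The paper's route bypasses both issues by never needing an inverse theorem at the top level.
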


Let us outline some ideas in the proof of Theorem \ref{thm1}, whose details will be presented in
Section \ref{sec2}--\ref{sec6}.
Firstly we argue similarly as in Bourgain \cite{Bourgain88} and Durcik, Guo and Roos \cite{DGR19} to reduce the problem to proving certain bilinear estimate containing two given polynomials. Since $P_1(t)$ is not simply a linear function any more, we use a substitution to make it linear in order to apply Bourgain's convolution trick, which is something slightly different from previous work.
 See Section \ref{sec2}.

A key step in our proof of the desired bilinear estimate is, by changing variables, to transform the pair of polynomials into (up to errors) the form of $\{c_1\omega, c_2\omega+c_3\omega^{\nu} \}$ for some positive $\nu\neq 1$ (see Section \ref{sec4}).  This novelty transformation turns out to be handy for the bipolynomial case. Surprisingly we will see in Section \ref{sec5} that the influence of errors cannot be ignored in some circumstance. This indicates that the errors cannot be dropped trivially. This phenomenon
does  not occur in the work in \cite{Bourgain88, DGR19}.
Hence, we are forced to seek new technical power to resolve this difficulty.  As a preparation for later sections,  in Section \ref{sec4} we  formulate and establish results  concerning sizes or asymptotics of the kernel $K$, which can be proved by a delicate application of the method of stationary phase and integration by parts.

If $c_2\neq 0$, we deal with the case when two polynomials have the same smallest power. In the special case when $P_1(t)=t$ and $P_2(t)$ has a linear term (corresponding to an integer $\nu\geq 2$), \cite[Section 4.3]{DGR19} took advantage of the nonlinearity of the nonlinear term of $P_2(t)$ and showed that certain key mixed derivative has a small lower bound (which decreases exponentially as $m$ increases) outside finitely many short intervals of length $O(2^{-\gamma m})$. Based on that they established the desired bilinear estimate. A key ingredient in our proof of the bipolynomial case is that we are able to show that the key mixed derivative is always nonvanishing and has a lower bound independent of $m$. This information enables us to give a simple proof of the desired estimate with a clearer and better decay rate, which leads to a better range of $\beta$ in our next theorem on the bipolynomial progress in fractional $\mathbb R$.   In the proof we use H\"ormander's \cite[Theorem 1.1]{Hormander73} and need to be clear about what its implicit constant depends on (see Theorem \eqref{app1-1}). See Section \ref{sec5}.

To prove its lower bound one can first derive a formula of the mixed derivative by a routine computation. If $\nu\neq 2$, although the formula is really long its structure is quite clear---one can separate it into several parts with each part having different magnitude. In particular the error caused by the transformation from the polynomials to a simple form (mentioned above) has no essential influence. Hence one can determine the size of the mixed derivative readily. If $\nu=2$, however, the influence of the error is relatively significant and the above separation is not attainable.
This is a new enemy, never appearing in any known work. To overcome this difficulty we manage to express the mixed derivative as a product of some nonzero factors and a polynomial factor. Once we prove the polynomial is nontrivial, we know it must have a lower bound which is possibly much smaller than $1$.

If $c_2=0$, we follow the strategy developed by the third author in \cite{Li13} to prove desired bilinear estimates as what has been done in \cite{DGR19}.
Besides oscillatory integrals, the proof relies heavily on a concept called $\sigma$-uniformity. This concept was inspired by Gowers' work in \cite{Gowers98} and crucial in \cite{Li13}'s study of bilinear Hilbert transforms along monomials. Such a strategy involving the $\sigma$-uniformity was later often used to study variants of the bilinear Hilbert transform, for example, the polynomial case in \cite{LX16}, the general curve case in \cite{GX16} and the bipolynomial case in \cite{Dong19}. Since we have transformed the bipolynomial to a general curve, our treatment in Section \ref{sec6}  is  similar to those in \cite{Li13, GX16}.


Our second result is the following theorem, related to the Roth configuration $x$, $x+P_1(t)$, $x+P_2(t)$ involving linearly independent polynomials, for sets in $\mathbb{R}$ of fractional dimensions.

\begin{theorem}\label{thm2}
Let $P_i : \mathbb{R}\rightarrow \mathbb{R}$ $(i=1,2)$ be two linearly independent polynomials satisfying $P_1(0)=P_2(0)=0$. Assume that $E\subset[0,1]$ is a closed set which supports a probability measure $\mu$ satisfying
\begin{itemize}
\item[(A)]$\mu([x,x+\epsilon])\leq C_1\epsilon^\alpha$ for all $0<\epsilon\leq1$,
\item[(B)]$|\widehat{\mu}(k)|\leq C_2(1-\alpha)^{-B}|k|^{-\frac{\beta}{2}}$ for all $k\in \mathbb{Z}\setminus \{0\}$,
\end{itemize}
where $0<\alpha<1$ and $8/9<\beta\leq 1$. If $\alpha>1-\epsilon_0$ for a sufficiently small constant $\epsilon_0>0$ depending only on $P_1$, $P_2$, $C_1$, $C_2$, $B$ and $\beta$, then $E$ contains a triplet
\begin{equation*}
x, x+P_1(t), x+P_2(t)
\end{equation*}
for some $t>0$.
\end{theorem}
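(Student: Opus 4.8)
The plan is to follow the Fourier-analytic "relative density increment" scheme that has become standard for problems of this type (Bourgain-type arguments adapted to the fractal setting, as in work on corners and Roth-type patterns in sets carrying Salem-like measures), using as the analytic engine the bilinear oscillatory estimates established in Sections~\ref{sec2}--\ref{sec6} for the configuration $x$, $x+P_1(t)$, $x+P_2(t)$. First I would introduce the trilinear counting form
\begin{equation*}
\Lambda(f,g,h)=\int\!\!\int f(x)\,g(x+P_1(t))\,h(x+P_2(t))\,\psi(t)\,dx\,dt,
\end{equation*}
where $\psi$ is a smooth bump adapted to a dyadic scale $t\sim\lambda$, and apply it with $f=g=h=d\mu$ regularized at scale $\lambda^{-\deg P_2}$ (or a comparable scale dictated by the change of variables of Section~\ref{sec4}). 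The goal is to show that if $E$ contained no nontrivial configuration, then $\Lambda$ would vanish for every small $\lambda$, and to contradict this by showing $\Lambda$ is bounded below.

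The main decomposition is $d\mu = \mu([0,1])\,dx + (d\mu - dx) =: 1\cdot dx + d\nu$ on the torus, i.e.\ splitting each copy of $\mu$ into its "mean" part and its oscillatory part $\nu$ whose Fourier coefficients obey the decay in hypothesis~(B). Expanding $\Lambda(d\mu,d\mu,d\mu)$ multilinearly yields a main term $\Lambda(dx,dx,dx)$, which is $\gtrsim 1$ by a direct computation (it is essentially $\int\psi$ times the measure of $[0,1]$), plus a sum of error terms in which at least one slot carries $d\nu$. For each such error term I would insert the Fourier expansion of $\nu$ and use (B) to extract a factor $|k|^{-\beta/2}$ from the oscillatory slot(s); the remaining integral in $t$ is exactly the oscillatory integral whose kernel $K$ is analyzed in Section~\ref{sec4}, so one gets decay in $k$ from stationary phase. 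Combining $|k|^{-\beta/2}$ with the stationary-phase decay $|k|^{-\theta}$ for an appropriate $\theta>0$ (coming from the nondegeneracy of the relevant mixed derivative, guaranteed by the \emph{linear independence} of $P_1,P_2$ — this is the hypothesis that replaces "different degrees" in Theorem~\ref{thm1}), the $k$-sum converges provided $\beta/2+\theta>1$, which is where the constraint $\beta>8/9$ enters: one needs $\theta$ to clear the gap $1-\beta/2<1/2$, and $8/9$ is presumably the threshold at which the available $\theta$ (roughly $1/2$ in the generic quadratic-type stationary point situation, minus losses) suffices. The terms with two or three copies of $d\nu$ are only better. One must also control the terms via hypothesis~(A): the $L^\infty$-type bounds on the regularized measure (Frostman condition) are what let one pass from $d\mu$ to its mollification at scale $\lambda^{-c}$ with acceptable error, and the factor $(1-\alpha)^{-B}$ is tracked through so that the final lower bound on $\Lambda$ takes the shape $c_0 - C(1-\alpha)^{-B}\lambda^{-\kappa(1-\beta/2-\theta)}\cdots$; choosing $\alpha$ close enough to $1$ (equivalently $\epsilon_0$ small) makes the error strictly smaller than the main term.

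Concretely the steps are: (i) set up the regularization of $\mu$ at a scale tied to $\lambda$ and verify, using (A), that replacing $\mu$ by $\mu\ast\phi_\lambda$ in $\Lambda$ costs an admissible error; (ii) expand $\Lambda$ into $2^3$ pieces and isolate the main term $\Lambda(dx,dx,dx)\gtrsim 1$; (iii) for each error piece, Fourier-expand the $d\nu$ slot(s), apply (B), and reduce the $t$-integral to the kernel estimates of Section~\ref{sec4}; (iv) sum the resulting geometric-in-$\lambda$ / convergent-in-$k$ series, using $\beta>8/9$ to guarantee convergence; (v) conclude that for $\alpha$ sufficiently close to $1$ the total error is $<\tfrac12\Lambda(dx,dx,dx)$, hence $\Lambda>0$, forcing a configuration $x$, $x+P_1(t)$, $x+P_2(t)$ with $t\sim\lambda>0$ in $E$. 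I expect the main obstacle to be step~(iii)–(iv): one must extract genuinely uniform stationary-phase decay for the oscillatory integral \emph{simultaneously in the two frequency parameters} coming from the two $\mu$-slots (the relevant object is a double oscillatory sum $\sum_{k_1,k_2}\widehat\nu(k_1)\widehat\nu(k_2)\int e^{i(k_1 P_1(t)+k_2 P_2(t))}\psi(t)\,dt$), and the required lower bounds on the mixed Hessian/derivative are exactly the delicate nonvanishing statements flagged in the introduction — in particular the $\nu=2$ anomaly, where the error from the normalization of the polynomials is non-negligible, has to be handled by the "product of nonzero factors times a nontrivial polynomial" device mentioned above, and it is this case that will most likely dictate the precise value $8/9$ and the dependence of $\epsilon_0$ on $P_1,P_2$.
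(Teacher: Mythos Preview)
Your high-level strategy---trilinear counting form, split $\mu$ into its mean and oscillatory parts, bound the error terms via Fourier decay plus oscillatory-integral estimates---is the {\L}aba--Pramanik/Fraser--Guo--Pramanik scheme that the paper ultimately invokes. But the paper's route differs from yours in two substantive ways.

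First, the paper does \emph{not} work directly with the pair $(P_1,P_2)$. It opens with the substitution $t\mapsto P_1^{-1}(t)$ on a small interval where $P_1$ is monotone, reducing to the configuration $x,\ x+t,\ x+\gamma(t)$ with $\gamma=P_2\circ P_1^{-1}$. This places the problem literally in the setting of \cite{FGP19}, and the paper then defers your steps (i), (ii), (iv), (v) entirely to \cite{LP09,FGP19}. The only new content is a \emph{Sobolev improving estimate} for the associated averaging operator (Proposition~\ref{s8-prop1}):
\[
\|T_{\sigma_1 l}(f,g)\|_{H^s}\lesssim_s 2^{\mathfrak{b}l}\|f\|_{H^{-s}}\|g\|_{H^{-s}},\qquad 0<s<\tfrac{1}{18},
\]
whose proof is a frequency-localized repackaging of the bilinear bound \eqref{s4-4} in Cases~\ref{s5-subcase1} and~\ref{s5-subcase2}. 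This single proposition replaces your entire step~(iii); your direct approach would amount to reproving the \cite{FGP19} machinery in the bipolynomial setting, which the substitution makes unnecessary.

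Second, your account of the threshold $8/9$ is incorrect. It does not arise from a condition $\beta/2+\theta>1$ with a single-variable stationary-phase exponent $\theta\approx\tfrac12$ (that would force $\beta>1$; and for $\beta$ near $8/9$ one has $1-\beta/2\approx 5/9>1/2$, so such a $\theta$ cannot clear the gap). The actual mechanism is: the bilinear estimate of Remark~\ref{s2-rm1} gives the gain $2^{-m/6}$; in the proof of Proposition~\ref{s8-prop1} this gain is distributed over three Littlewood--Paley pieces, producing the range $s<1/18$; the \cite{FGP19} argument then requires $s>(1-\beta)/2$, hence $(1-\beta)/2<1/18$, i.e.\ $\beta>8/9$. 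The $\nu=2$ anomaly you flag is already absorbed into the proof of the $2^{-m/6}$ gain in Section~\ref{sec5} and does not resurface at the level of Theorem~\ref{thm2}.
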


{\L}aba and Pramanik \cite{LP09} first showed that, under the dimensionality and Fourier decay assumptions (A) and (B) with $2/3<\beta\leq 1$, if $\alpha$ is sufficiently close to $1$ then the set $E$ contains an arithmetic progression $x$, $x+t$, $x+2t$ for some $t>0$. Recently Fraser, Guo and Pramanik \cite{FGP19} proved that, under the same assumptions (A) and (B) with $1-s_0<\beta<1$ for certain constant $s_0$ depending only on a given polynomial $P$ with $\deg (P)\geq 2$ and $P(0)=0$, if $\alpha$ is sufficiently close to $1$ then the set $E$ contains a nonlinear configuration $x$, $x+t$, $x+P(t)$ for some $t>0$. Krause \cite{Krause1904} studied the same polynomial configuration problem with an emphasis on assuming only sufficiently large Hausdorff dimension.

For other interesting related results, especially in high dimensions, see for example Chan, {\L}aba and Pramanik \cite{CLP16}, Henriot, {\L}aba and Pramanik \cite{HLI16} and references therein.

Our result extends the main result in \cite{FGP19} to a bipolynomial setting with $8/9<\beta\leq 1$.
The estimates with a better decay rate obtained in Case \ref{s5-subcase1} and \ref{s5-subcase2} (see Section \ref{sec5} and Subsection \ref{sec6-1}) allow us to strengthen the implicit value $s_0$ in \cite{FGP19} to an explicit value $1/9$, which leads to a better range of $\beta$. One key point in the proof of Theorem \ref{thm2} is to prove a generalized Sobolev improving estimate involving information of two polynomials.   Another interesting point is that we only assume that two polynomials are linearly independent rather than of different degrees since the existence of the desired Roth configuration is only proved for some small $t$.


\medskip

{\it Notations.} For real $X$ and nonnegative $Y$, we use $X\lesssim Y$ to denote $|X|\leq CY$ for some constant $C$. We write $X\lesssim_p Y$ to indicate that the implicit constant $C$ depends on a parameter $p$. If $X$ is nonnegative, $X\gtrsim Y$ means $Y\lesssim X$. The Landau notation $X=O_p(Y)$ is equivalent to $X\lesssim_p Y$. The notation $X\asymp Y$ means that $X\lesssim Y$ and $Y\lesssim X$. We let $\mathbb{Z}_+=\mathbb{N}\cup \{0\}$ and $e(x)=\exp(2\pi i x)$. The Fourier transform of $f$ is $\widehat{f}(\xi)=\mathcal{F}(f)(\xi)=\int_{\mathbb{R}} \! f(x)e(-\xi x) \,\textrm{d}x$. $a \gg (\ll)$ $b$ means $a$ is much greater (less) than $b$.  $\textbf{1}_{E}$ represents the characteristic function of a set $E$.


\section{Reduction of Theorem \ref{thm1}}\label{sec2}

Throughout this paper we denote two real polynomials by
\begin{equation}
P_1(t)=a_{\sigma_1}t^{\sigma_1}+a_{\sigma_1+1}t^{\sigma_1+1}+\cdots+a_{d_1}t^{d_1} \label{s2-8}
\end{equation}
and
\begin{equation}	
P_2(t)=b_{\sigma_2}t^{\sigma_2}+b_{\sigma_2+1}t^{\sigma_2+1}+\cdots+b_{d_2}t^{d_2},\label{s2-9}
\end{equation}
where $a_{\sigma_1}, a_{d_1}$, $b_{\sigma_2}, b_{d_2}$ are nonzero, $1\leq \sigma_1\leq d_1$ and $1\leq \sigma_2\leq d_2$.

For Theorem \ref{thm1} we assume $d_1<d_2$. To prove Theorem \ref{thm1} it suffices to prove that there is a $\delta=\delta(\varepsilon, P_1, P_2)$ with \eqref{s1-2} such that
\begin{equation}\label{lbdd}
\int_0^N \!\!\!\! \int_0^{N^{1/d_2}}\!\!  f(x)f\left(x+P_1(t)\right)f\left(x+P_2(t)\right) \,\textrm{d}t\textrm{d}x>\delta N^{1+\frac{1}{d_2}}
\end{equation}
for all measurable functions $f$ on $\mathbb{R}$ with $\supp(f)\subset [0, N]$, $0\leq f\leq 1$ and $\int_0^N \!\! f\geq \varepsilon N$. Then the desired result follows easily by taking $f=\textbf{1}_{S}$.

Equivalently, by rescaling, it can be reduced to prove that
\begin{equation}
\int_0^1 \!\!\! \int_0^1 \!\!  f(x)f\left(x+N^{-1}P_1\left(N^{\frac{1}{d_2}}t\right)\right)f\left(x+N^{-1}P_2\left(N^{\frac{1}{d_2}}t\right)\right) \,\textrm{d}t\textrm{d}x>\delta\label{s2-7}
\end{equation}
for all measurable functions $f$ on $\mathbb{R}$ with $\supp(f)\subset [0, 1]$, $0\leq f\leq 1$ and $\int_0^1 \! f\geq \varepsilon$.

We follow the approach used in \cite{Bourgain88} and also \cite[Section 2]{DGR19} to reduce the problem to the following bilinear estimate.

Let $\tau$ be a nonnegative smooth bump function supported in $[1/2,2]$ with integral $1$. Let $\tau_l(t)=2^l \tau(2^l t)$. We state our main proposition as follows.

\begin{proposition}\label{s2-prop1}
Let $P_1$ and $P_2$, denoted by \eqref{s2-8} and  \eqref{s2-9}, be such that $d_1<d_2$. There exist $\mathfrak{b}>0$ and sufficiently large $\Gamma_1\in \mathbb{N}$ such that for any $m\in \mathbb{Z}_+$, $j\in \Gamma_1 (2\mathbb{Z}_+)$, $l\in \Gamma_1 (\mathbb{Z}_+ \!\! \setminus \!\! 2\mathbb{Z}_+)$ \footnote{Here $\Gamma_1 (2\mathbb{Z}_+)=\{0, 2\Gamma_1, 4\Gamma_1, \ldots \}$ and $\Gamma_1 (\mathbb{Z}_+ \!\! \setminus \!\! 2\mathbb{Z}_+)=\{\Gamma_1, 3\Gamma_1, 5\Gamma_1,\ldots \}$.}and Schwartz functions $f$ and $g$ with $\supp(\widehat{g})\subset \{\xi\in \mathbb{R}: 2^m\leq |\xi|\leq 2^{m+1}\}$, we have
\begin{equation}
\begin{split}
&\quad\left\|\int \!\! f\left(x+2^{-jd_2}P_1\left(2^{j}t\right)\right)g\left(x+2^{-jd_2}P_2\left(2^{j}t\right)\right) \tau_l(t) \,\textrm{d}t\right\|_{L^1_x \left([0,1] \right)} \\
&\lesssim 2^{\mathfrak{b} l} 2^{-m/16} \left\|f\right\|_2\left\|g\right\|_2.
\end{split}\label{s1-1}
\end{equation}
\end{proposition}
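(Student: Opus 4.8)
The plan is to establish the bilinear estimate \eqref{s1-1} by exploiting the oscillation of $\widehat{g}$ on the dyadic annulus $\{2^m \leq |\xi| \leq 2^{m+1}\}$, which is the only place the gain $2^{-m/16}$ can come from. First I would write $g$ in terms of its Fourier transform and interchange the order of integration, so that the left side of \eqref{s1-1} becomes, after a standard duality/Cauchy--Schwarz reduction against a test function $h \in L^\infty_x([0,1])$, an expression of the form
\begin{equation*}
\int\!\!\int\!\!\int f\bigl(x+2^{-jd_2}P_1(2^jt)\bigr)\,\widehat{g}(\xi)\,e\bigl(\xi(x+2^{-jd_2}P_2(2^jt))\bigr)\,\tau_l(t)\,h(x)\,\textrm{d}\xi\,\textrm{d}t\,\textrm{d}x .
\end{equation*}
The natural next move is to apply Cauchy--Schwarz in $x$ (or in the pair $(x,\xi)$) to split off $\|f\|_2$ and $\|g\|_2$, expand the square, and arrive at a kernel
\begin{equation*}
K(\xi,\eta) = \int\!\!\int \tau_l(t)\tau_l(s)\, e\Bigl(\xi\,2^{-jd_2}P_2(2^jt) - \eta\,2^{-jd_2}P_2(2^js) + (\text{phase from }P_1)\Bigr)\,\textrm{d}t\,\textrm{d}s
\end{equation*}
whose decay in $|\xi|+|\eta| \sim 2^m$ governs everything. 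I would then invoke the change of variables announced in Section~\ref{sec4} that reduces the pair $(P_1,P_2)$, after rescaling $t \mapsto \omega$ on the support of $\tau_l$, to the model form $\{c_1\omega,\ c_2\omega + c_3\omega^\nu\}$ (with controlled error terms), so that the relevant oscillatory integral is one for which the stationary-phase and integration-by-parts bounds on the kernel $K$ (proved in Section~\ref{sec4}) apply directly.

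With the kernel bounds in hand, the scheme is the usual $TT^*$ / almost-orthogonality argument: the operator norm of the bilinear form is controlled by $\sup_\xi \int |K(\xi,\eta)|\,\textrm{d}\eta$ (and its transpose), and the decay estimates for $K$ give a bound of the shape $2^{\mathfrak{b}l}2^{-m/16}$ once one tracks how the scale $2^l$ of the $\tau_l$-window and the frequency scale $2^m$ enter. The exponent $1/16$ is not sharp and will emerge from whatever power the stationary-phase estimate for $K$ delivers after interpolating the trivial $L^1$ bound against the decay; the factor $2^{\mathfrak{b}l}$ absorbs the loss from the short $t$-interval of length $2^{-l}$ and from derivatives of the phase picking up powers of $2^l$. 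The restriction $j \in \Gamma_1(2\mathbb{Z}_+)$, $l \in \Gamma_1(\mathbb{Z}_+\setminus 2\mathbb{Z}_+)$ with $\Gamma_1$ large is there precisely to guarantee a genuine separation of scales between $2^j$ and $2^l$, so that after the change of variables the model curve $\{c_1\omega, c_2\omega+c_3\omega^\nu\}$ genuinely dominates its error terms on the relevant window; I would use this parity/divisibility structure exactly when verifying the hypotheses of the Section~\ref{sec4} kernel estimates.

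The case division foreshadowed in the introduction has to be respected here. When $c_2 \neq 0$ (the two polynomials share the same lowest power), the mixed-derivative lower bound from Section~\ref{sec5}---which, crucially, is shown there to be nonvanishing with a bound independent of $m$---feeds H\"ormander's oscillatory integral theorem (Theorem~\eqref{app1-1}) to yield the kernel decay, and the argument is the clean one described above. When $c_2 = 0$ (so $P_1$ and $P_2$ effectively have different lowest powers after the transformation, the genuine ``general curve'' situation), the direct stationary-phase bound on $K$ is insufficient and I would instead run the $\sigma$-uniformity machinery of \cite{Li13, GX16}: decompose $g$ into a $\sigma$-uniform part, for which the bilinear form is small by an oscillation estimate, and a non-$\sigma$-uniform part, which correlates with a single wave packet and can be handled by a $TT^*$ argument at one frequency. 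The main obstacle, and the place where this proof differs from \cite{Bourgain88, DGR19}, is that the error terms produced by the transformation to $\{c_1\omega, c_2\omega+c_3\omega^\nu\}$ cannot simply be discarded---in particular when $\nu = 2$ the error is of comparable size to the main term in the mixed derivative---so establishing the kernel bounds requires the delicate factorization of the mixed derivative into nonvanishing factors times a provably nontrivial polynomial factor, and carefully tracking all implicit constants through H\"ormander's theorem so that the final dependence is only on $P_1$, $P_2$ and not on $m$.
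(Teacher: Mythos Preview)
Your proposal correctly identifies the high-level ingredients---the change of variables to the model form $\{c_1\omega, c_2\omega+c_3\omega^\nu\}$, the case split according to whether $c_2$ vanishes, H\"ormander's theorem, the $\sigma$-uniformity machinery, and the delicate $\nu=2$ factorization---but the technical sequencing has a genuine gap that would prevent the argument from closing.

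The paper does \emph{not} begin by applying Cauchy--Schwarz in $x$ to peel off $\|f\|_2$. Instead it applies Fourier inversion to \emph{both} $f$ and $g$, obtaining a trilinear form $\iint \widehat{f}(\xi)\widehat{g}(\eta)\widehat{h}(-\xi-2^{-m_0-k}\eta)K(\xi,\eta)\,\mathrm{d}\xi\mathrm{d}\eta$ where $K$ is a \emph{single} oscillatory integral in $t$ (not the double $t,s$ integral you wrote). A Littlewood--Paley decomposition of $f$---which you omit entirely---then localizes $\widehat{f}$ to a dyadic shell comparable to that of $\widehat{g}$; the off-diagonal pieces are dispatched by integration by parts since the phase has no critical point. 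The gain $2^{-m/16}$ does \emph{not} come from a Schur bound $\sup_\xi\int|K(\xi,\eta)|\,\mathrm{d}\eta$; such an absolute-value bound cannot see the relevant oscillation. Rather, one first applies stationary phase to the single-variable $K$ to extract a leading term $\lambda^{-1/2}a(\xi,\eta)e(\lambda\Phi(\xi,\eta))$ with $\Phi(\xi,\eta)=\phi(\omega_0(\xi,\eta),\xi,\eta)$, substitutes this back into the trilinear form, changes variables $\xi+2^{-m_0-k}\eta\to\xi$ to free $\widehat{h}$, and only \emph{then} runs $TT^*$ in $\eta$. The object to which H\"ormander's theorem is applied is the phase difference $P_\zeta(\xi,\eta)=\Phi(\xi-2^{-m_0-k}\zeta,\eta+\zeta)-\Phi(\xi,\eta)$, and the crucial nonvanishing quantity from Section~\ref{sec5} is the mixed derivative $\partial^2_{\xi\eta}(2^{-m_0-k}\partial_\xi-\partial_\eta)\Phi$. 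Without this precise object in hand, neither the H\"ormander step nor the $\nu=2$ polynomial factorization has a home.

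A second, smaller gap: your dichotomy ``$c_2\neq 0$ via H\"ormander, $c_2=0$ via $\sigma$-uniformity'' is too coarse. Both Case~\ref{s5-subcase2} and Case~\ref{s5-subcase3} have $c_2=0$, but only Case~\ref{s5-subcase3} (the regime $j-l\geq\Gamma_1$, where $m_0=(d_2-d_1)(j-l)\gg 1$) requires $\sigma$-uniformity; the H\"ormander route alone produces an unacceptable $2^{m_0/6}$ loss there, and the $\sigma$-uniformity second estimate is needed precisely to balance it out. Case~\ref{s5-subcase2} (the regime $j-l\leq -\Gamma_1$, $\sigma_1\neq\sigma_2$) is handled by H\"ormander alone. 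So the operative trichotomy is on the sign of $j-l$ and, when $j-l<0$, on whether $\sigma_1=\sigma_2$---not simply on $c_2$.
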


\begin{remark}\label{s2-rm1}
From the proof of Proposition \ref{s2-prop1}, it is easy to observe that the condition $d_1<d_2$ is only used if $j-l\geq \Gamma_1>0$ (see Case \ref{s5-subcase3} below); if $j-l\leq -\Gamma_1<0$ (see Case \ref{s5-subcase1} and \ref{s5-subcase2} below) a weaker assumption that $P_1$ and $P_2$ are linearly independent (instead of $d_1<d_2$) would suffice to yield a better decay factor  $2^{-m/6}$ (instead of $2^{-m/16}$).

This observation will be particularly useful in the proof of Theorem \ref{thm2}.
\end{remark}

Proposition \ref{s2-prop1} will be proved  in Section \ref{sec3}--\ref{sec6}. In the remaining part of this section, let us see why Proposition \ref{s2-prop1} implies \eqref{lbdd}, and therefore Theorem \ref{thm1}.
We first assume $N=2^{jd_2}$, $j\in \Gamma_1 (2\mathbb{Z}_+)$, $\Gamma_1\in \mathbb{N}$. Let $\rho\in C_c^{\infty}(\mathbb{R})$ be nonnegative, even, constant on $[-1,1]$ with $\supp(\rho)\subset[-2,2]$ and $\int \! \rho=1$. Denote $\rho_l(t)=2^l \rho(2^l t)$, $p_{ij}(t)=2^{-jd_2}P_i(2^{j}t)$ and
\begin{equation*}
I=\int_0^1 \!\!\! \int_0^1 \!\!  f(x)f\left(x+p_{1j}(t)\right)f\left(x+p_{2j}(t)\right) \,\textrm{d}t\textrm{d}x.
\end{equation*}

For $l, l', l''\in \Gamma_1 (\mathbb{Z}_+ \!\! \setminus \!\! 2\mathbb{Z}_+)$ with $l'<l<l''$ we have
\begin{equation*}
2^l I\gtrsim_{\tau} \int_0^1 \!\!\! \int_0^1 \!\!  f(x)f\left(x+p_{1j}(t)\right)f\left(x+p_{2j}(t)\right) \tau_l(t) \,\textrm{d}t\textrm{d}x.
\end{equation*}	
The integral on the right side is the sum of
\begin{align*}
I_1&=\int_0^1 \!\!\! \int_0^1 \!\!  f(x)f\left(x+p_{1j}(t)\right)f*\rho_{l'}\left(x+p_{2j}(t)\right) \tau_{l}(t) \,\textrm{d}t\textrm{d}x,\\
I_2&=\int_0^1 \!\!\! \int_0^1 \!\!  f(x)f\left(x+p_{1j}(t)\right)(f*\rho_{l''}-f*\rho_{l'})\left(x+p_{2j}(t)\right) \tau_{l}(t) \,\textrm{d}t\textrm{d}x,\\
I_3&=\int_0^1 \!\!\! \int_0^1 \!\!  f(x)f\left(x+p_{1j}(t)\right)(f-f*\rho_{l''})\left(x+p_{2j}(t)\right) \tau_{l}(t) \,\textrm{d}t\textrm{d}x.
\end{align*}

Following the boundedness of $f$ and H\"older's inequality, it is obvious that
\begin{equation*}
I_2=O\left(\left\|f*\rho_{l''}-f*\rho_{l'}\right\|_2\right).
\end{equation*}

In addition, by a dyadic decomposition on the frequency side and using Proposition \ref{s2-prop1}, we have
\begin{equation*}
I_3=O \left( 2^{\mathfrak{b}l-\frac{1}{17}l''}\right).
\end{equation*}

We set
\begin{equation}
I_1':=\int_0^1 \!\!  f(x)f*\rho_{l'}\left(x\right) \left(\int_0^1 \!\!  f\left(x+p_{1j}(t)\right)\tau_{l}(t) \,\textrm{d}t\right) \,\textrm{d}x\,. \label{s2-2}
\end{equation}
Then by the mean value theorem we have
\begin{equation*}
I_1-I_1'=O_{P_2}\left(2^{l'-l}\right).
\end{equation*}
Notice that the inner integral in \eqref{s2-2} can be represented as
\begin{equation}
\int_0^1 \!\!  f\left(x+p_{1j}(t)\right)\tau_{l}(t) \,\textrm{d}t=\int \!\!  f\left(x+2^{-jd_2}P_1(t)\right)\tau_{l-j}(t) \,\textrm{d}t, \label{s2-1}
\end{equation}
$t\asymp 2^{j-l}$ and $|j-l|\geq \Gamma_1$. If $\Gamma_1$ is sufficiently large, the size of $P_1(t)$ is dominated by its monomial $a_{s}t^{s}$, $s=d_1$ or $\sigma_1$. This leads us to use the substitution
\begin{equation*}
\omega=2^{-jd_2}|P_1(t)|
\end{equation*}
to rewrite \eqref{s2-1} as a convolution. We may assume that $a_s<0$ while the case $a_s>0$ is the same up to a reflection. Therefore
\begin{equation*}
\eqref{s2-1}=f*\widetilde{\tau}(x),
\end{equation*}
where
\begin{equation*}
\widetilde{\tau}(\omega)=\tau_{l-j}(t(\omega))t'(\omega).
\end{equation*}
Let $\varsigma_{s}^{j,l}=|a_s|2^{s(j-l)-jd_2}$ and $\rho_{\varsigma_{s}^{j,l}}(x)=(\varsigma_{s}^{j,l})^{-1}\rho((\varsigma_{s}^{j,l})^{-1}x)$. Then
\begin{align*}
\left\|f*\widetilde{\tau}-f*\rho_{\varsigma_{s}^{j,l'}}\right\|_2
&\leq \left\|f*\rho_{\varsigma_{s}^{j,l''}}-f*\rho_{\varsigma_{s}^{j,l'}}\right\|_2 \\
&\quad +\left\|\widetilde{\tau}-\widetilde{\tau}*\rho_{\varsigma_{s}^{j,l''}}\right\|_1+ \left\|\widetilde{\tau}*\rho_{\varsigma_{s}^{j,l'}}-\rho_{\varsigma_{s}^{j,l'}}\right\|_1\\
&=\left\|f*\rho_{\varsigma_{s}^{j,l''}}-f*\rho_{\varsigma_{s}^{j,l'}}\right\|_2+O(2^{l-l''})+O(2^{l'-l}).
\end{align*}
The last two bounds follow from rescaling and the mean value theorem. We thus readily get
\begin{equation*}
|I_1'-I_1''|\leq \sum_{s=\sigma_1, d_1}\left\|f*\rho_{\varsigma_{s}^{j,l''}}-f*\rho_{\varsigma_{s}^{j,l'}}\right\|_2+O(2^{l-l''})+O(2^{l'-l}),
\end{equation*}
where
\begin{equation}\label{I''1}
I_1'':=\int_0^1 \!\!  f(x)f*\rho_{l'}\left(x\right)f*\rho_{\varsigma_{s}^{j,l'}}\left(x\right) \,\textrm{d}x\,.
\end{equation}
By  a lemma of Bourgain \cite[Lemma 6]{Bourgain88}, we see that $I''_1$ obeys
\begin{equation*}
I_1''\geq c_{\rho} \left(\int_{0}^{1} \! f \right)^3\geq c_{\rho}\varepsilon^3.
\end{equation*}

Collecting the above upper and lower bounds yields that if $l''$ (resp. $l$) is large enough\footnote{This is quantifiable.} with respect to $l$ (resp. $l'$) then
\begin{equation*}
2^l I+\left\|f*\rho_{l''}-f*\rho_{l'}\right\|_2+\sum_{s=\sigma_1, d_1}\left\|f*\rho_{\varsigma_{s}^{j,l''}}-f*\rho_{\varsigma_{s}^{j,l'}}\right\|_2\geq c\varepsilon^3.
\end{equation*}
In fact we can choose from $\Gamma_1 (\mathbb{Z}_+ \!\! \setminus \!\! 2\mathbb{Z}_+)$ a sequence $\Gamma_1=l_1<l_2<\cdots<l_k<\cdots$ (independently of $f$ and $j$) such that for each $k\in\mathbb{N}$ we have $l_{k+1}\asymp (17\mathfrak{b})^{k}\log \varepsilon^{-1}$ and that either
\begin{equation}
I>2^{-l_{k+1}-1}c\varepsilon^3   \label{s2-3}
\end{equation}
or
\begin{equation}
\left\|f*\rho_{l_{k+1}}-f*\rho_{l_{k}}\right\|_2+\sum_{s=\sigma_1, d_1}\left\|f*\rho_{\varsigma_{s}^{j,l_{k+1}}}-f*\rho_{\varsigma_{s}^{j,l_{k}}}\right\|_2\geq c\varepsilon^{3}/2. \label{s2-4}
\end{equation}
Notice that by using the Plancherel theorem and the fast decay of $\hat{\rho}$ we have
\begin{equation*}
\sum_{k=1}^{\infty}\left(\left\|f*\rho_{l_{k+1}}-f*\rho_{l_{k}}\right\|_2^2+\sum_{s=\sigma_1, d_1}\left\|f*\rho_{\varsigma_{s}^{j,l_{k+1}}}-f*\rho_{\varsigma_{s}^{j,l_{k}}}\right\|_2^2\right)\leq C_{\rho}.
\end{equation*}
Hence \eqref{s2-4} can only occur a bounded number of times and \eqref{s2-3} must hold for some $1\leq k_0\leq K:=\lceil 12c^{-2}C_\rho\varepsilon^{-6}\rceil+1$. Then
\begin{equation*}
I>2^{-l_{k_0+1}-1}c\varepsilon^3\geq2^{-l_{K+1}-1}c\varepsilon^3.
\end{equation*}
Using the size estimate of $l_{K+1}$, we can conclude that there exists a $\delta=\delta(\varepsilon,P_1,P_2)$ satisfying \eqref{s1-2} and \eqref{s2-7}.

For general $N>1$ we use the pigeonhole principle to complete the proof. Assume
\begin{equation*}
2^{jd_2}<N<2^{(j+2\Gamma_1)d_2}.
\end{equation*}
for some $j\in\Gamma_1(2\mathbb{Z_+})$. Let $N_0=N2^{-jd_2}$. Then $1<N_0<2^{2\Gamma_1d_2}$. For any measurable $f$ with $\supp(f) \subset [0, N]$, $0\leq f\leq 1$ and $\int_0^N \!\! f\geq \varepsilon N$, we have
\begin{equation*}
  \sum_{i=0}^{\lceil N_0 \rceil-2}\int_{i 2^{jd_2}}^{(i+1) 2^{jd_2}}\!\! f+\int_{(N_0-1)  2^{jd_2}}^{N_0 2^{jd_2}}\!\! f\geq \int_0^N \!\! f \geq \varepsilon N.
\end{equation*}
Hence there exists an $i_0\in \{0,1,\ldots,\lceil N_0\rceil-2,N_0-1\}$ such that
\begin{equation*}
   \int_{i_0 2^{jd_2}}^{(i_0+1) 2^{jd_2}}\!\! f \geq \frac{\varepsilon N}{\lceil N_0\rceil} \geq \frac{\varepsilon}{2}2^{jd_2}.
\end{equation*}
Therefore
\begin{align}
   &\quad \int_0^N \!\!\!\! \int_0^{N^{1/d_2}}\!\!  f(x)f\left(x+P_1(t)\right)f\left(x+P_2(t)\right) \,\textrm{d}t\textrm{d}x\label{s2-5}\\
   &\geq\int_{i_0 2^{jd_2}}^{(i_0+1) 2^{jd_2}} \!\!\!\! \int_0^{2^j}\!\!  f(x)f\left(x+P_1(t)\right)f\left(x+P_2(t)\right) \,\textrm{d}t\textrm{d}x\nonumber\\
   &\geq\int_0^{2^{jd_2}} \!\!\!\! \int_0^{2^j}\!\!  g(x)g\left(x+P_1(t)\right)g\left(x+P_2(t)\right) \,\textrm{d}t\textrm{d}x,\nonumber
\end{align}
where $g(x)=f(x+i_0 2^{jd_2})\textbf{1}_{[0,2^{jd_2}]}(x)$ is measurable such that $\supp(g)\subset[0,2^{jd_2}]$, $0\leq g\leq 1$ and
\begin{equation*}
\int_0^{2^{jd_2}}\!\! g=\int_{i_0 2^{jd_2}}^{(i_0+1) 2^{jd_2}}\!\! f\geq\frac{\varepsilon}{2}2^{jd_2}.
\end{equation*}
Applying to $g$ the conclusion from the first part yields
\begin{equation*}
\eqref{s2-5}>\delta(\varepsilon/2, P_1,P_2)2^{jd_2(1+\frac{1}{d_2})}>\tilde{\delta}N^{1+\frac{1}{d_2}}
\end{equation*}
with $\tilde{\delta}=2^{-2\Gamma_1(d_2+1)}\delta(\varepsilon/2, P_1,P_2)$, which implies \eqref{lbdd}.
Therefore, to establish Theorem  \ref{thm1}, it remains to prove Proposition \ref{s2-prop1}.


\section{Reduction of Proposition \ref{s2-prop1}} \label{sec3}

We give a proof of Proposition \ref{s2-prop1} in Section \ref{sec3}--\ref{sec6}. We divide it into several sections for simplification and clarification.

Let
\begin{equation*}
\widehat{\mathbb{P}_c f}(\xi)=\widehat{f}(\xi)\mathbf{1}_{[2^c, 2^{c+1})}(|\xi|).
\end{equation*}
To prove \eqref{s1-1} we need to estimate the $L^1([0,1])$ norm of
\begin{equation}
\sum_{k\in\mathbb{Z}}  \int \!\! \mathbb{P}_k f\left(x+2^{-jd_2}P_1\left(2^{j}t \right) \right) g\left(x+2^{-jd_2}P_2\left(2^{j}t \right) \right) \tau_l(t) \,\textrm{d}t \label{s4-1}
\end{equation}
which is, by the Fourier inversion,
\begin{equation*}
\sum_{k\in\mathbb{Z}} \iint \widehat{\mathbb{P}_k f}(\xi) \widehat{g}(\eta) e\left( (\xi+\eta)x \right) \mathfrak{m}_{j,l}(\xi, \eta)  \,\textrm{d}\xi\textrm{d}\eta,
\end{equation*}
where
\begin{equation}
\mathfrak{m}_{j,l}(\xi, \eta)=\int \!\! \tau(t) e\left(2^{-jd_2}\left(\xi P_1\left(2^{j-l}t \right)+\eta P_2\left(2^{j-l}t \right)\right)\right)  \,\mathrm{d}t. \label{s4-2}
\end{equation}

If the sizes of $P_1(2^{j-l}t)$ and $P_2(2^{j-l}t )$ are dominated by their monomials $a_{r} (2^{j-l}t)^{r}$ and  $b_{r'} (2^{j-l}t )^{r'}$ respectively\footnote{Since we only consider those $j$ and $l$ with $|j-l|\geq \Gamma_1$ for a sufficiently large $\Gamma_1$, both polynomials are dominated by their own monomials with the largest or smallest powers.}, then we denote
\begin{equation*}
|a_r|=2^{\alpha_r},\quad |b_{r'}|= 2^{\beta_{r'}},\quad m_0=\beta_{r'}-\alpha_r+(j-l)(r'-r)
\end{equation*}
and rewrite \eqref{s4-1} as
\begin{equation}
\bigg(\sum_{\substack{k\in\mathbb{Z}\\|k|\geq \mathscr{K}}}+\sum_{\substack{k\in\mathbb{Z}\\|k|<\mathscr{K}}}\bigg)\!\! \iint \!\! \mathcal{F}\left(\mathbb{P}_{m+m_0+k} f\right)(\xi) \widehat{g}(\eta) e\left( (\xi+\eta)x \right) \mathfrak{m}_{j,l}(\xi, \eta)  \,\textrm{d}\xi\textrm{d}\eta \label{s4-3}
\end{equation}
for some constant $\mathscr{K}>0$.

Notice that the phase function in $\mathfrak{m}_{j,l}(\xi, \eta)$ has no critical points if $\mathscr{K}$ is greater than some large absolute constant. Integration by parts once yields
\begin{equation*}
\|\mathfrak{m}_{j,l}\|_{\infty}\lesssim_{P_1,P_2} 2^{d_2 l}2^{-m}.
\end{equation*}
By using this bound, the duality of $L^1$ and H\"older's inequality, one can readily show that the $L^1$ norm of the first sum in \eqref{s4-3} over all integers $|k|\geq \mathscr{K}$ is of size
\begin{equation*}
O_{P_1,P_2}\left( 2^{d_2 l}2^{-m/2}\|f\|_2\|g\|_2\right)
\end{equation*}
which is smaller than the desired bound in \eqref{s1-1}. Indeed for $h\in L^{\infty}([0,1])$ we have
\begin{align*}
&\quad \left|\int_0^1  \!\!\!\!
\iint \!\! \sum_{|k|\geq \mathscr{K}}\!\! \mathcal{F}\left(\mathbb{P}_{m+m_0+k} f\right)(\xi) \widehat{g}(\eta) e\left( (\xi+\eta)x \right) \mathfrak{m}_{j,l}(\xi, \eta)h(x)  \,\textrm{d}\xi\textrm{d}\eta\textrm{d}x \right|\\
&\lesssim 2^{d_2 l}2^{-m}\iint \! \left| \widehat{f}(\xi) \widehat{g}(\eta)\widehat{h}(-\xi-\eta)\right|  \,\textrm{d}\xi\textrm{d}\eta\\
&\lesssim 2^{d_2 l}2^{-m/2}\|f\|_2\|g\|_2\|h\|_{\infty},
\end{align*}
as desired.

For the second sum in \eqref{s4-3}, it suffices to prove that for any fixed $|k|<\mathscr{K}$ and some absolute constant $b>0$
\begin{equation*}
\left\| \iint \!\! \widehat{f}(\xi) \widehat{g}(\eta) e\left( (\xi+\eta)x \right) \mathfrak{m}_{j,l}(\xi, \eta)  \,\textrm{d}\xi\textrm{d}\eta\right\|_{L^1_x\left([0,1]\right)} \lesssim_{\mathscr{K}} 2^{bl}2^{-m/16}\|f\|_2\|g\|_2
\end{equation*}
for any $f, g\in \mathscr{S}(\mathbb{R})$ (which can be extended to $L^2$ by a standard limiting argument) with $\supp(\widehat{f})\subset \{\xi\in \mathbb{R}: 2^{m+m_0+k}\leq |\xi|\leq 2^{m+m_0+k+1}\}$ and $\supp(\widehat{g})\subset \{\eta\in \mathbb{R}: 2^m\leq |\eta|\leq 2^{m+1}\}$.

By a rescaling argument it suffices to prove that for any fixed $|k|<\mathscr{K}$ and some absolute constant $b>0$
\begin{equation}
\begin{split}
&\quad \left\| \iint\!\! \widehat{f}(\xi) \widehat{g}(\eta) e\left( \left(\xi+2^{-m_0-k}\eta\right)x \right) K(\xi, \eta)  \,\textrm{d}\xi\textrm{d}\eta\right\|_{L^1_x\left([0,2^{m+m_0+k}]\right)} \\
&\lesssim_{\mathscr{K}} 2^{bl}2^{m_0/2}2^{-m/16}\|f\|_2\|g\|_2
\end{split}\label{s4-6}
\end{equation}
for any $f, g\in \mathscr{S}(\mathbb{R})$ with $\supp(\widehat{f})\subset [1,2]$ or $[-2,-1]$ and $\supp(\widehat{g})\subset [1,2]$ or $[-2,-1]$, where
\begin{equation}
K(\xi, \eta)=\int  \!\! \tau(t) e\left(2^{m-jd_2}\left(2^{m_0+k} P_1\left(2^{j-l}t \right)\xi+ P_2\left(2^{j-l}t \right)\eta\right)\right)  \,\mathrm{d}t.   \label{s5-1}
\end{equation}

By a duality argument we conclude that to prove Proposition \ref{s2-prop1} it suffices to show that for any fixed $|k|<\mathscr{K}$ and some absolute constant $b>0$
\begin{equation}
\begin{split}
&\quad \left| \iint\!\! \widehat{f}(\xi) \widehat{g}(\eta) \widehat{h}\left(-\xi-2^{-m_0-k}\eta\right) K(\xi, \eta)  \,\textrm{d}\xi\textrm{d}\eta\right| \\
&\lesssim_{\mathscr{K}} 2^{bl} 2^{-m/2}2^{-m/16}\|f\|_2\|g\|_2\|h\|_2,
\end{split}\label{s4-4}
\end{equation}
for any $f, g, h\in \mathscr{S}(\mathbb{R})$ with $\supp(\widehat{f})\subset [1,2]$ or $[-2,-1]$ and $\supp(\widehat{g})\subset [1,2]$ or $[-2,-1]$.

Concerning the observation we have made in Remark \ref{s2-rm1}, we will additionally show that if $j-l\leq -\Gamma_1<0$ then a weaker assumption that $P_1$ and $P_2$ are linearly independent (instead of $d_1<d_2$) would suffice to yield \eqref{s4-4} with a better decay factor  $2^{-m/6}$ (instead of $2^{-m/16}$). See Case \ref{s5-subcase1} and \ref{s5-subcase2} below.


\section{The kernel $K$}\label{sec4}

In this section we deal with the special oscillatory integral \eqref{s5-1}. We will show that either its size is very small or it has an asymptotics with a leading term containing an oscillatory factor. To help achieve this goal we first change variables to transform the pair of polynomials appearing in the phase of \eqref{s5-1},
\begin{equation}
\{P_1(2^{j-l}t), P_2(2^{j-l}t)\}, 1/2\leq t\leq 2, \label{s5-2}
\end{equation}
into (up to small errors) the form of
\begin{equation*}
\{c_1\omega, c_2\omega+c_3\omega^{\nu} \}
\end{equation*}
for $\nu\neq 1$ and nonzero $c_1$ and $c_3$.

Since $|j-l|\geq \Gamma_1$, as long as $\Gamma_1$ is chosen sufficiently large, $2^{j-l}$ is sufficiently small or large at our disposal. Both polynomials in \eqref{s5-2} are dominated by their own monomials with the smallest or largest powers. Hence we need to discuss the following three cases. We emphasize that in Case \ref{s5-subcase1} and \ref{s5-subcase2} it suffices to assume that \textit{$P_1$ and $P_2$ are linearly independent} rather than $d_1<d_2$. It is in Case \ref{s5-subcase3} where we assume $d_1<d_2$.

The notations defined in this section will be used in the next two sections.

\subsection{Case $j-l\leq -\Gamma_1$ and $\sigma_1=\sigma_2$}\label{s5-subcase1}

In the first case when $j-l\ll 0$  and two polynomials have the same smallest power $\sigma:=\sigma_1=\sigma_2$, we use the substitution
\begin{equation}
P_1\left(2^{j-l}t \right)=a_{\sigma} 2^{\sigma(j-l)} \omega \label{s5-23}
\end{equation}
to transfer the pair \eqref{s5-2}. Hence $\omega=t^{\sigma}(1+O(2^{-|j-l|}))$ and
\begin{equation*}
P_2\left(2^{j-l}t \right)=b_{\sigma} 2^{\sigma(j-l)}\omega+\left(P_2\left(u\right)-\frac{b_{\sigma}}{a_{\sigma}}P_1\left(u\right)\right)\bigg|_{u=2^{j-l}t(\omega)}.
\end{equation*}
Since $P_1$ and $P_2$ are linearly independent and both terms with the power $\sigma$ are cancelled, we can write
\begin{equation*}
P_2(u)-\frac{b_{\sigma}}{a_{\sigma}}P_1(u)=c_{\varrho}u^{\varrho}+\mathcal{E}(u)
\end{equation*}
for some integer $\sigma<\varrho\leq d_2$ and nonzero constant $c_{\varrho}$, where $\mathcal{E}(u)$ is a (possibly trivial) polynomial of $u$ with all powers $\geq \varrho+1$. Therefore we can write the kernel $K$ in the following standard form
\begin{equation}
K(\xi, \eta)=\int \!\! \widetilde{\tau}(\omega) e\left(\lambda \phi(\omega, \xi, \eta) \right) \,\mathrm{d}\omega \label{s5-6}
\end{equation}
with a cut-off function $\widetilde{\tau}(\omega)=\tau(t(\omega))t'(\omega)$, a parameter
\begin{equation}
\lambda=2^{m-jd_2+\varrho(j-l)}\geq 2^{-d_2 l}2^m \label{s5-7}
\end{equation}
and a phase function
\begin{align}
\phi(\omega, \xi, \eta)&=2^{-\varrho(j-l)}\left(2^{m_0+k}P_1\left(2^{j-l}t(\omega) \right)\xi+P_2\left(2^{j-l}t(\omega) \right)\eta   \right)  \label{s5-21}\\
                       &=AC_1\omega\xi+A b_{\sigma}\omega\eta +Q(\omega)\eta,   \label{s5-8}
\end{align}
where $2^{m_0}=|b_{\sigma}|/|a_{\sigma}|$, $A=2^{(\sigma-\varrho)(j-l)}$, $C_1=\sgn\left(a_{\sigma}\right)|b_{\sigma}|2^k$ and
\begin{equation}
Q(\omega)=c_{\varrho}\omega^{\nu}+E(\omega), \quad \nu=\varrho/\sigma>1, \label{s5-16}
\end{equation}
with an error term
\begin{equation}
E(\omega)=c_{\varrho}t(\omega)^{\varrho}-c_{\varrho}\omega^{\nu}+2^{-\varrho(j-l)} \mathcal{E}\left(2^{j-l}t(\omega)\right).  \label{s5-17}
\end{equation}

It is easy to observe that
\begin{equation}
E(\omega)=2^{-\varrho(j-l)}\mathcal{P}\left(2^{j-l}t(\omega)\right) \label{s5-22}
\end{equation}
with
\begin{equation}
\mathcal{P}(u)=c_{\varrho}u^{\varrho}-c_{\varrho}a_{\sigma}^{-\nu}P_1(u)^{\nu}+\mathcal{E}(u)
=P_2(u)-\frac{b_{\sigma}}{a_{\sigma}}P_1(u)-\frac{c_{\varrho}}{a_{\sigma}^{\nu}}P_1(u)^{\nu}\label{s5-25}
\end{equation}
which is a polynomial, if $\nu\in \mathbb{N}$, of degree $\leq\max\{\nu d_1, d_2\}$. It is routine to check that $E^{(i)}(\omega)=O(2^{-|j-l|})$.


\subsection{Case $j-l\leq -\Gamma_1$ and $\sigma_1\neq \sigma_2$}\label{s5-subcase2}

In the second case when $j-l\ll 0$  and two polynomials have different smallest powers, we use the substitution
\begin{equation*}
P_1\left(2^{j-l}t \right)=a_{\sigma_1} 2^{\sigma_1(j-l)} \omega
\end{equation*}
to transfer the pair \eqref{s5-2}. Hence $\omega=t^{\sigma_1}(1+O(2^{-|j-l|}))$ and
\begin{equation*}
P_2\left(2^{j-l}t \right)=b_{\sigma_2}2^{\sigma_2(j-l)}\omega^{\nu}
+\left( P_2\left(u\right)-b_{\sigma_2}\left( \frac{P_1\left(u\right)}{a_{\sigma_1}} \right)^{\nu}\right)\bigg|_{u=2^{j-l}t(\omega)}
\end{equation*}
with $\nu=\sigma_2/\sigma_1\neq 1$. Notice that $2^{m_0}=2^{(\sigma_2-\sigma_1)(j-l)}|b_{\sigma_2}|/|a_{\sigma_1}|$. Therefore the kernel can be written as
\begin{equation}
K(\xi, \eta)=\int \!\! \widetilde{\tau}(\omega) e\left(\lambda \phi(\omega, \xi, \eta) \right) \,\mathrm{d}\omega \label{s5-3}
\end{equation}
with a cut-off function $\widetilde{\tau}(\omega)=\tau(t(\omega))t'(\omega)$, a parameter
\begin{equation}
\lambda=2^{m-jd_2+\sigma_2(j-l)}\geq 2^{-d_2 l}2^m \label{s5-4}
\end{equation}
and a phase function
\begin{equation}
\phi(\omega, \xi, \eta)=C_1\omega\xi +Q(\omega)\eta,   \label{s5-5}
\end{equation}
where $C_1=\sgn\left(a_{\sigma_1}\right)|b_{\sigma_2}|2^k$ and
\begin{equation}
Q(\omega)=b_{\sigma_2}\omega^{\nu}+ E(\omega)   \label{s5-15}
\end{equation}
with an error term
\begin{equation}
E(\omega)=2^{-\sigma_2(j-l)}\left( P_2\left(u\right)-b_{\sigma_2}\left( \frac{P_1\left(u\right)}{a_{\sigma_1}} \right)^{\nu}\right)\bigg|_{u=2^{j-l}t(\omega)} \label{s5-12}
\end{equation}
satisfying $E^{(i)}(\omega)=O(2^{-|j-l|})$.


\subsection{Case $j-l\geq \Gamma_1$ and $d_1<d_2$}\label{s5-subcase3}

In the third case when $j-l\gg 0$ and $d_1<d_2$, we use the substitution
\begin{equation*}
P_1\left(2^{j-l}t \right)=a_{d_1}2^{d_1(j-l)}\omega
\end{equation*}
to transfer the pair \eqref{s5-2}. Hence $\omega=t^{d_1}(1+O(2^{-|j-l|}))$ and
\begin{equation*}
P_2\left(2^{j-l}t \right)=b_{d_2}2^{d_2(j-l)}\omega^{\nu}
+\left( P_2\left(u\right)-b_{d_2}\left( \frac{P_1\left(u\right)}{a_{d_1}} \right)^{\nu}\right)\bigg|_{u=2^{j-l}t(\omega)}
\end{equation*}
with $\nu=d_2/d_1>1$. Notice that $2^{m_0}=2^{(d_2-d_1)(j-l)}|b_{d_2}|/|a_{d_1}|$.  Therefore the kernel can be written as
\begin{equation}
K(\xi, \eta)=\int \! \widetilde{\tau}(\omega) e\left(\lambda \phi(\omega, \xi, \eta) \right) \,\mathrm{d}\omega\label{s5-9}
\end{equation}
with a cut-off function $\widetilde{\tau}(\omega)=\tau(t(\omega))t'(\omega)$,  a parameter
\begin{equation}
\lambda=2^{-d_2 l}2^{m}\label{s5-10}
\end{equation}
and  a phase function
\begin{equation}
\phi(\omega, \xi, \eta)=C_1\omega\xi +Q(\omega)\eta,   \label{s5-11}
\end{equation}
where $C_1=\sgn\left(a_{d_1}\right)|b_{d_2}|2^k$ and
\begin{equation}
Q(\omega)=b_{d_2}\omega^{\nu}+E(\omega) \label{s5-14}
\end{equation}
with an error term
\begin{equation}
E(\omega)=2^{-d_2(j-l)}\left( P_2\left(u\right)-b_{d_2}\left( \frac{P_1\left(u\right)}{a_{d_1}} \right)^{\nu}\right)\bigg|_{u=2^{j-l}t(\omega)} \label{s5-13}
\end{equation}
satisfying $E^{(i)}(\omega)=O(2^{-|j-l|})$.

From now on we always assume $\lambda>1$ otherwise $|K(\xi,\eta)|\leq 1\leq \lambda^{-1}\leq 2^{d_2 l}2^{-m}$ and the desired \eqref{s4-4} follows immediately.

In the last part of this section we formulate results concerning the asymptotics and estimates of the kernel  $K(\xi, \eta)$. Roughly speaking, if its phase function $\phi$ (in the form of \eqref{s5-8}, \eqref{s5-5} or \eqref{s5-11}) has a (nondegenerate) critical point we apply the method of stationary phase to get an asymptotics; if not we apply integration by parts to get a rapid decay. To fulfil this idea rigorously one needs to distinguish the situations when there exists a critical point or not and be careful with the implicit constant produced by integration by parts. An example of such a discussion can be found in \cite[Lemma 3.1]{GX16} and its proof. By using the same argument we can readily get the following two lemmas.

If $\Gamma_1$ is sufficiently large then $\supp(\widetilde{\tau})\subset [2^{-\sigma-1}, 2^{\sigma+1}]$, $[2^{-\sigma_1-1}, 2^{\sigma_1+1}]$ and $[2^{-d_1-1}, 2^{d_1+1}]$ for Case \ref{s5-subcase1}, \ref{s5-subcase2} and \ref{s5-subcase3} respectively.

Let us first consider Case \ref{s5-subcase2} and \ref{s5-subcase3}. It is easy to observe that there exists a constant $C_4>2$ such that if $|\xi| \notin [C_4^{-1}, C_4]$, $|\eta|\in [1, 2]$, $\omega\in \supp(\widetilde{\tau})$ and  $\Gamma_1$ is sufficiently large then $|\partial_{\omega} \phi(\omega, \xi, \eta)|$ has a uniform lower bound. Integration by parts immediately yields
\begin{equation}
K(\xi, \eta)=O_{P_1,P_2}(\lambda^{-1}). \label{s5-24}
\end{equation}
Since we expect this bound to produce desired results easily, we will only consider $C_4^{-1}\leq |\xi|\leq C_4$ in the following lemma.

\begin{lemma}[Case \ref{s5-subcase2} and  \ref{s5-subcase3}]\label{s5-lemma1}
Let $C_4^{-1}\leq |\xi|\leq C_4$ and $1\leq |\eta|\leq 2$. Assume that $\chi\in C_c^{\infty}(\mathbb{R})$ has its support contained in an interval $I$. If $\Gamma_1$ and $|I|$ are sufficiently large and small respectively (both depending only on $P_1$ and $P_2$), then either one of the following two statements holds.

(1) We have
\begin{equation}
\chi\left(-C_1\frac{\xi}{\eta}\right)K(\xi, \eta)=O_{P_1,P_2}(\lambda^{-1}).
\label{s5-18}
\end{equation}

(2) One can choose an interval $[1/c, c]$ with $c=c(P_1)>1$ such that for each pair $(\xi, \eta)$  with $-C_1\xi/\eta\in \supp \chi$, there exists a unique point in $[1/c, c]$,
\begin{equation*}
\omega_0=\omega_0(\xi, \eta)=(Q')^{-1}\left(-C_1\frac{\xi}{\eta}\right),
\end{equation*}
such that
\begin{equation}
\partial_{\omega} \phi(\omega_0, \xi, \eta)=0   \label{s5-19}
\end{equation}
and
\begin{equation}
\begin{split}
&\chi\left(-C_1\frac{\xi}{\eta}\right)K(\xi, \eta)=\\
&\quad C\frac{\chi(-C_1\xi/\eta)\widetilde{\tau}(\omega_0)}{|\partial_{\omega\omega}^2\phi(\omega_0, \xi,
\eta)|^{1/2}}e\left(\lambda \phi(\omega_0, \xi,
\eta)\right)\lambda^{-1/2}+O_{P_1,P_2}(\lambda^{-3/2})
\end{split}\label{s5-20}
\end{equation}
with $C$ being an absolute constant.
\end{lemma}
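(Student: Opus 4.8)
The plan is to prove Lemma \ref{s5-lemma1} by a direct case analysis depending on whether the phase $\phi(\omega,\xi,\eta)$ has a critical point in $\supp(\widetilde\tau)$. Recall that in Case \ref{s5-subcase2} and \ref{s5-subcase3} we have $\phi(\omega,\xi,\eta)=C_1\omega\xi+Q(\omega)\eta$, so $\partial_\omega\phi=C_1\xi+Q'(\omega)\eta$, and a critical point must satisfy $Q'(\omega)=-C_1\xi/\eta$. The key structural fact is that $Q(\omega)=b\,\omega^\nu+E(\omega)$ with $\nu\neq 1$ fixed, $b\neq 0$, and $E^{(i)}(\omega)=O(2^{-|j-l|})$, so for $\Gamma_1$ large $Q'$ is a small perturbation of $b\nu\,\omega^{\nu-1}$ on the compact $\omega$-interval $\supp(\widetilde\tau)$. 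In particular $Q''(\omega)=b\nu(\nu-1)\omega^{\nu-2}+E''(\omega)$ is bounded away from $0$ there (since $\nu\neq 1$ and $\omega$ is bounded above and below), so $Q'$ is strictly monotone and hence a diffeomorphism from $\supp(\widetilde\tau)$ onto its image, an interval $J$ whose closure is contained in a fixed compact set depending only on $P_1,P_2$ (because $C_4^{-1}\le|\xi|\le C_4$, $1\le|\eta|\le 2$).

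First I would fix the interval $[1/c,c]\supset\overline{\supp(\widetilde\tau)}$ and choose $|I|$ small enough (depending only on $P_1,P_2$, via the modulus of continuity of $(Q')^{-1}$ and $Q''$) that the following dichotomy holds: either $\supp\chi\cap J=\emptyset$, in which case on $\supp\chi$ the value $-C_1\xi/\eta$ lies outside the range of $Q'$ over $\supp(\widetilde\tau)$, so $|\partial_\omega\phi|$ has a uniform lower bound on $\supp(\widetilde\tau)$ and repeated integration by parts (using that all $\omega$-derivatives of $\widetilde\tau$ and of $\phi$ beyond first order are $O_{P_1,P_2}(1)$) gives $\chi(-C_1\xi/\eta)K(\xi,\eta)=O_{P_1,P_2}(\lambda^{-1})$, which is conclusion (1); or $\supp\chi\subset J'$ for a slightly enlarged interval $J'$ still compactly inside the range of $Q'$, in which case for each $(\xi,\eta)$ with $-C_1\xi/\eta\in\supp\chi$ there is a unique $\omega_0=(Q')^{-1}(-C_1\xi/\eta)\in[1/c,c]$ with $\partial_\omega\phi(\omega_0,\xi,\eta)=0$, and moreover $|\partial^2_{\omega\omega}\phi(\omega_0,\xi,\eta)|=|Q''(\omega_0)\eta|$ is bounded below by a positive constant. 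Then the stationary phase expansion of van der Corput type—exactly as in \cite[Lemma 3.1]{GX16}—yields \eqref{s5-20}, with the leading term as written and error $O_{P_1,P_2}(\lambda^{-3/2})$; here one should note $\lambda>1$ has been assumed, so these negative powers of $\lambda$ are genuinely decaying.

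The only real points requiring care are (a) making the dichotomy on $\supp\chi$ clean: because $\chi$ is an arbitrary bump supported in \emph{some} interval $I$ of small but fixed length, one must check that the constraint ``$I$ small, $\Gamma_1$ large'' forces $\supp\chi$ to be entirely inside the range of $Q'$ or entirely outside an enlargement of it—this follows from the uniform separation estimates on $Q'$ over the compact $\omega$-range, uniformly in $m,j,l$; and (b) tracking that the implicit constants in the integration-by-parts and stationary-phase steps depend only on $P_1$ and $P_2$ and not on $m$, $j$, $l$, or $k$—this is where the uniform bounds $E^{(i)}=O(2^{-|j-l|})$ and $|k|<\mathscr K$ are used, together with the fact that the relevant $\omega$-intervals are fixed. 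I expect step (a), i.e.\ setting up the dichotomy so that the ``boundary'' case $-C_1\xi/\eta\in\partial J$ is excluded, to be the main (though still routine) obstacle; it is handled purely by choosing $|I|$ small relative to the fixed geometry of $Q'$, and the rest is a verbatim invocation of the stationary-phase argument already cited.
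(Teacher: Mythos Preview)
Your proposal is correct and follows essentially the same approach as the paper: the paper does not give a detailed proof of this lemma but instead refers to \cite[Lemma 3.1]{GX16}, noting only that one must distinguish whether a critical point exists and track the implicit constants from integration by parts, which is exactly the dichotomy and bookkeeping you outline. Your expanded sketch of the monotonicity of $Q'$, the resulting dichotomy on $\supp\chi$, and the invocation of stationary phase versus integration by parts matches the intended argument.
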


After modifying this lemma a little, we get the analogous result for Case \ref{s5-subcase1}.

\begin{lemma}[Case \ref{s5-subcase1}]\label{s5-lemma2}
Let $1\leq |\xi|\leq 2$ and $1\leq |\eta|\leq 2$. Then the statement of Lemma \ref{s5-lemma1} is still valid  after we replace every $-C_1\frac{\xi}{\eta}$ by $-AC_1\frac{\xi}{\eta}-Ab_{\sigma}$.
\end{lemma}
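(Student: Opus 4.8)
The plan is to deduce Lemma \ref{s5-lemma2} from Lemma \ref{s5-lemma1} by observing that the only structural difference between Case \ref{s5-subcase1} and Case \ref{s5-subcase2}/\ref{s5-subcase3} lies in the linear-in-$\omega$ part of the phase function. Recall that in Case \ref{s5-subcase1} the phase is $\phi(\omega,\xi,\eta)=AC_1\omega\xi+Ab_\sigma\omega\eta+Q(\omega)\eta$ (see \eqref{s5-8}), whereas in the other two cases it is $\phi(\omega,\xi,\eta)=C_1\omega\xi+Q(\omega)\eta$ (see \eqref{s5-5} and \eqref{s5-11}). Thus $\partial_\omega\phi=AC_1\xi+Ab_\sigma\eta+Q'(\omega)\eta$ in the first case versus $\partial_\omega\phi=C_1\xi+Q'(\omega)\eta$ in the others. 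Dividing by $\eta$, the critical point equation becomes $Q'(\omega)=-AC_1\xi/\eta-Ab_\sigma$ instead of $Q'(\omega)=-C_1\xi/\eta$; the second derivative $\partial^2_{\omega\omega}\phi=Q''(\omega)\eta$ is unchanged since the added term is linear in $\omega$. This is exactly the substitution $-C_1\xi/\eta\mapsto -AC_1\xi/\eta-Ab_\sigma$ asserted in the statement.

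The key steps I would carry out are as follows. First, I would note that $Q(\omega)=c_\varrho\omega^\nu+E(\omega)$ with $\nu=\varrho/\sigma>1$ and $E^{(i)}(\omega)=O(2^{-|j-l|})$, so for $\Gamma_1$ large $Q'(\omega)$ is, up to a small error, $c_\varrho\nu\omega^{\nu-1}$, which is strictly monotone on $\supp(\widetilde\tau)\subset[2^{-\sigma-1},2^{\sigma+1}]$; hence $Q'$ is a diffeomorphism onto its image, and $(Q')^{-1}$ makes sense on a fixed interval $[1/c,c]$ with $c=c(P_1)$ as before. Second, I would check the range condition: for $1\le|\xi|\le2$, $1\le|\eta|\le2$, and $-AC_1\xi/\eta-Ab_\sigma\in\supp\chi$, with $|\supp\chi|$ small, the value $-AC_1\xi/\eta-Ab_\sigma$ lies in the image of $Q'$ restricted to $[1/c,c]$ (this uses that $A=2^{(\sigma-\varrho)(j-l)}$ is bounded and that one may take $\Gamma_1$ large and $|I|$ small to localize near a point where $Q'$ is invertible). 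Third, when there is no critical point in $\supp(\widetilde\tau)$—which happens precisely when $-AC_1\xi/\eta-Ab_\sigma$ is outside the image of $Q'$, i.e. $|\partial_\omega\phi|$ has a uniform lower bound—integration by parts gives \eqref{s5-18} with the same $O_{P_1,P_2}(\lambda^{-1})$ bound; the factor $A$ and the constant shift $Ab_\sigma$ only affect lower-order terms in the integration-by-parts expansion and do not worsen the bound since $A$ is bounded. Fourth, when the critical point $\omega_0=(Q')^{-1}(-AC_1\xi/\eta-Ab_\sigma)$ exists, it is nondegenerate because $\partial^2_{\omega\omega}\phi(\omega_0,\xi,\eta)=Q''(\omega_0)\eta\asymp c_\varrho\nu(\nu-1)\omega_0^{\nu-2}\eta$ is bounded away from zero on the relevant compact set, so the stationary phase asymptotic \eqref{s5-20} goes through verbatim with the new $\omega_0$ and $C$ still an absolute constant. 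Running the argument of \cite[Lemma 3.1]{GX16} with these substitutions yields the lemma.

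The main obstacle—really the only nontrivial point—is verifying that the same choices of $\Gamma_1$ (large) and $|I|$ (small) that work in Lemma \ref{s5-lemma1} still work here uniformly in $j,l,m$, given the extra bounded-but-nonconstant factor $A=2^{(\sigma-\varrho)(j-l)}$ and the extra additive term $Ab_\sigma$ in the critical-point equation. One has to be a little careful because $A$ depends on $j-l$, so $-AC_1\xi/\eta-Ab_\sigma$ ranges over a family of intervals rather than a fixed one; however, since $\sigma<\varrho\le d_2$ and $j-l\le-\Gamma_1<0$, one has $A=2^{(\sigma-\varrho)(j-l)}\le 2^{(\varrho-\sigma)\Gamma_1}$ is bounded by a constant depending only on $P_1,P_2$, and in fact $|A|\le C(P_1,P_2)$ so the whole quantity $-AC_1\xi/\eta-Ab_\sigma$ stays in a fixed compact set; covering that compact set by finitely many short intervals $I$ (the number depending only on $P_1,P_2$) reduces to the already-treated situation. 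Everything else—the monotonicity of $Q'$, the nondegeneracy of the critical point, the integration-by-parts bound in the no-critical-point regime—is insensitive to replacing the affine function $\xi\mapsto -C_1\xi/\eta$ by $\xi\mapsto -AC_1\xi/\eta-Ab_\sigma$, so no genuinely new estimate is needed; the proof is a routine adaptation, which is why the authors state it as ``after modifying this lemma a little.''
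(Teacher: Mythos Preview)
Your overall strategy is exactly what the paper intends: the phase in Case \ref{s5-subcase1} differs from that in Lemma \ref{s5-lemma1} only by the extra linear-in-$\omega$ term $Ab_\sigma\omega\eta$, so the critical-point equation becomes $Q'(\omega)=-AC_1\xi/\eta-Ab_\sigma$ while $\partial^2_{\omega\omega}\phi=Q''(\omega)\eta$ is unchanged, and one reruns the stationary phase/integration by parts argument of \cite[Lemma 3.1]{GX16}. That part is fine.

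However, there is a concrete error in your ``main obstacle'' paragraph. You assert that $A=2^{(\sigma-\varrho)(j-l)}\le 2^{(\varrho-\sigma)\Gamma_1}$ is bounded by a constant depending only on $P_1,P_2$. The inequality goes the other way: since $\sigma-\varrho<0$ and $j-l\le -\Gamma_1<0$, the exponent $(\sigma-\varrho)(j-l)$ is \emph{positive} and increases without bound as $|j-l|\to\infty$; in fact $A\ge 2^{(\varrho-\sigma)\Gamma_1}$, and $A$ can be arbitrarily large. Consequently your claim that ``the whole quantity $-AC_1\xi/\eta-Ab_\sigma$ stays in a fixed compact set'' is false, and your justification in step three that the integration-by-parts bound is unaffected ``since $A$ is bounded'' is invalid as written.

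The lemma is nevertheless true, and the fix does not require $A$ to be bounded. The point is that on the support of $\chi$ one has $-AC_1\xi/\eta-Ab_\sigma\in I$, a fixed bounded interval, so $\partial_\omega\phi=\eta\big(Q'(\omega)-(-AC_1\xi/\eta-Ab_\sigma)\big)$ is uniformly bounded above and, in the no-critical-point alternative, uniformly bounded below in terms of $I$ and $Q$ only. Moreover, because the $A$-dependent part of $\phi$ is linear in $\omega$, all derivatives $\partial^k_{\omega}\phi$ with $k\ge 2$ equal $\eta\,Q^{(k)}(\omega)$ and carry no $A$; hence the amplitudes arising in repeated integration by parts, as well as the stationary phase constants, are controlled uniformly in $j,l$. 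This is the reason the dichotomy holds with constants depending only on $P_1,P_2$, not any boundedness of $A$. (The largeness of $A$ does matter in the \emph{application} of the lemma in Section \ref{sec5}, where the partition of unity over $-A(C_1[1/2,2]+b_\sigma)$ has $\asymp A$ pieces and the accumulated error in \eqref{s7-14} picks up a factor of $A$; but that is outside the scope of Lemma \ref{s5-lemma2} itself.)
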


\begin{remark}
The constant $c(P_1)$ can be chosen to be $2^{2\sigma+1}$, $2^{2\sigma_1+1}$ and $2^{2d_1+1}$ for Case \ref{s5-subcase1}, \ref{s5-subcase2} and \ref{s5-subcase3} respectively.
\end{remark}

In later applications one can use a partition of unity to restrict the domain of $\xi/\eta$. If \eqref{s5-18} holds the situation is easy---the fast decay in $\lambda$ will yield good bounds. If \eqref{s5-20} holds the situation is more difficult. While the error term is easy as well, one needs to take advantage of the oscillatory term in \eqref{s5-20} to obtain desired bounds.


\section{Case \ref{s5-subcase1}} \label{sec5}

To prove \eqref{s4-4} we first insert
\begin{equation*}
\sum_s \chi_s \left(-AC_1\frac{\xi}{\eta}-Ab_{\sigma} \right)
\end{equation*}
into the integrand of \eqref{s4-4}, where  $\{\chi_s \}$ is a partition of unity associated to a finite open cover of the interval $-A(C_1[1/2,2]+b_{\sigma})$ (or $-A(C_1[-2,-1/2]+b_{\sigma})$) by using open intervals of certain fixed length (depending on $P_1$ and $P_2$; see Lemma \ref{s5-lemma2}). Such manipulation does not change \eqref{s4-4} because of the size restrictions on $\xi$ and $\eta$.

For each $s$, Lemma \ref{s5-lemma2} ensures that
\begin{equation}
\chi_s\left(-AC_1\frac{\xi}{\eta}-Ab_{\sigma}\right)K(\xi,\eta) \label{s7-15}
\end{equation}
has either a rapid decay in $\lambda$ or an asymptotics. If the former situation occurs for a $s$, the contribution to the integral in \eqref{s4-4} corresponding to that $s$ is small. In fact the total contribution of those $s$'s is bounded by
\begin{align}
&\quad A\lambda^{-1}\!\! \iint\!\! \left|\widehat{f}(\xi) \widehat{g}(\eta) \widehat{h}\left(-\xi-2^{-m_0-k}\eta\right)\right| \textrm{d}\xi\textrm{d}\eta\nonumber\\
&\lesssim 2^{(d_2+\varrho-\sigma)l}2^{-m}\|f\|_2\|g\|_2\|h\|_2, \label{s7-14}
\end{align}
where we have used $2^{(\sigma-\varrho)j}\leq 1$, \eqref{s5-7} and H\"older's inequality in the last inequality.

We will next focus on the situation when \eqref{s7-15} has an asymptotics. It is easy to observe that the number of such $s$'s is $\lesssim 1$. Indeed, in view of \eqref{s5-8} and \eqref{s5-16}, the existence of a critical point of size $O(1)$ requires that $|AC_1\frac{\xi}{\eta}+Ab_{\sigma}|\lesssim 1$. Hence the observation follows.

Let us arbitrarily fix a $s$ and denote by $\omega_0$ the critical point satisfying \eqref{s5-19}. Then \eqref{s7-15} has a leading term containing $\omega_0$ and an error term $O(\lambda^{-1})$.  The contribution of the error term is also bounded by \eqref{s7-14}. Concerning the leading term we will prove for some absolute constant $\mathfrak{b}_1$
\begin{equation}
\begin{split}
&\quad \lambda^{-\frac{1}{2}}\left| \iint\!\! \widehat{f}(\xi) \widehat{g}(\eta) \widehat{h}\left(-\xi-2^{-m_0-k}\eta\right) a\left(\xi, \eta \right) e\left(\lambda \Phi(\xi, \eta)\right)  \,\textrm{d}\xi\textrm{d}\eta\right| \\
&\lesssim_{\mathscr{K}} 2^{\mathfrak{b}_1 l}2^{-\frac{1}{2}m}2^{-\frac{1}{6}m}\|f\|_2\|g\|_2\|h\|_2,
\end{split}\label{s7-16}
\end{equation}
where $\Phi(\xi, \eta)=\phi(\omega_0, \xi, \eta)$ and
\begin{equation*}
a\left(\xi, \eta \right)=\chi_s\left(-AC_1\frac{\xi}{\eta}-Ab_{\sigma}\right)\widetilde{\tau}(\omega_0)
\left|\partial^2_{\omega\omega}\phi(\omega_0, \xi,\eta)\right|^{-1/2}.
\end{equation*}
The bounds \eqref{s7-14} and \eqref{s7-16} yield \eqref{s4-4} with a better decay factor  $2^{-m/6}$ (instead of $2^{-m/16}$) for Case \ref{s5-subcase1}.

To prove \eqref{s7-16}, by changing variables $\xi+2^{-m_0-k}\eta\rightarrow \xi$ and $\eta\rightarrow \eta$ and using the duality of $L^2$, it suffices to prove that
\begin{equation}
\begin{split}
&\quad\left\|\int \!\! \widehat{f}(\xi-2^{-m_0-k}\eta) \widehat{g}(\eta) a\!\left(\xi-2^{-m_0-k}\eta, \eta \right)\!e\!\left(\lambda \Phi(\xi-2^{-m_0-k}\eta, \eta)\right)\!\textrm{d}\eta\right\|_{L^2_\xi} \\
&\lesssim_{\mathscr{K}} 2^{\mathfrak{b}_1l}\lambda^{\frac{1}{2}}2^{-\frac{1}{2}m}2^{-\frac{1}{6}m}\|f\|_2\|g\|_2.
\end{split}\label{s7-1}
\end{equation}
After proper manipulation the square of the $L^2$ norm in \eqref{s7-1} can be written as
\begin{equation}
\int \! \mathrm{d}\zeta \! \iint \!\! F_{\zeta}(\xi)G_{\zeta}(\eta) \psi_{\zeta}(\xi,\eta) e\left(
\lambda\zeta \frac{P_{\zeta}(\xi, \eta)}{\zeta}\right)   \,\mathrm{d}\xi\mathrm{d}\eta,  \label{s4-5}
\end{equation}
where
\begin{equation*}
F_{\zeta}(\xi)=\widehat{f}(\xi-2^{-m_0-k}\zeta)\overline{\widehat{f}(\xi)},
\end{equation*}
\begin{equation*}
G_{\zeta}(\eta)=\widehat{g}(\eta+\zeta)\overline{\widehat{g}(\eta)},
\end{equation*}
\begin{equation*}
\psi_{\zeta}(\xi,\eta)=a\left(\xi-2^{-m_0-k}\zeta, \eta+\zeta \right)\overline{a\left(\xi, \eta \right)}
\end{equation*}
and
\begin{equation*}
P_{\zeta}(\xi, \eta)=\Phi\left(\xi-2^{-m_0-k}\zeta, \eta+\zeta\right)-\Phi(\xi, \eta).
\end{equation*}

We would like to estimate \eqref{s4-5} by using H\"ormander's \cite[Theorem 1.1]{Hormander73} with an explicit constant (see Theorem \eqref{app1-1}). Hence we need to estimate sizes of $\partial_{\xi\eta}^2 P_{\zeta}$ and derivatives of $P_{\zeta}$ and $\psi_{\zeta}$. In fact we claim that if $\Gamma_1$ is chosen sufficiently large then there exists an integer $\mathfrak{k}=\mathfrak{k}(P_1,P_2)$  such that
\begin{equation}
\left|\partial_{\xi\eta}^2 \left(2^{-m_0-k}\partial_{\xi}-\partial_{\eta}\right)\Phi(\xi, \eta)\right|\asymp 2^{\mathfrak{k}(j-l)}, \label{s7-2}
\end{equation}
\begin{equation}
\partial_{\xi\xi\eta}^3 \left(2^{-m_0-k}\partial_{\xi}-\partial_{\eta}\right)\Phi(\xi, \eta)\lesssim 2^{(\sigma-\varrho+\mathfrak{k})(j-l)}, \label{s7-3}
\end{equation}
\begin{equation}
\partial_{\xi\xi\xi\eta}^4 \left(2^{-m_0-k}\partial_{\xi}-\partial_{\eta}\right)\Phi(\xi, \eta)\lesssim 2^{(2\sigma-2\varrho+\mathfrak{k})(j-l)} \label{s7-11}
\end{equation}
and
\begin{equation}
\frac{\partial^i \psi_{\zeta}}{\partial \xi^i}\lesssim 2^{i(\sigma-\varrho)(j-l)}, i=0,1,2. \label{s7-13}
\end{equation}
We will prove \eqref{s7-2}--\eqref{s7-13} at the end of this section. As an easy consequence we have
\begin{equation*}
\left|\partial_{\xi\eta}^2 \frac{P_{\zeta}}{\zeta}\right|\asymp 2^{\mathfrak{k}(j-l)},
\end{equation*}
\begin{equation*}
\partial_{\xi\xi\eta}^3 \frac{P_{\zeta}}{\zeta}\lesssim 2^{(\sigma-\varrho+\mathfrak{k})(j-l)}
\end{equation*}
and
\begin{equation*}
\partial_{\xi\xi\xi\eta}^4 \frac{P_{\zeta}}{\zeta}\lesssim 2^{(2\sigma-2\varrho+\mathfrak{k})(j-l)}.
\end{equation*}

By splitting the integral \eqref{s4-5} into two with respect to $\zeta$, i.e. $|\zeta|\leq \zeta_0$ and $|\zeta|>\zeta_0$, and applying trivial estimate and Theorem \eqref{app1-1} respectively, we get
\begin{equation*}
\eqref{s4-5}\lesssim_{\mathscr{K}} \left(2^{(\sigma-\varrho-\mathfrak{k})(j-l)}+1\right)\left(\zeta_0+\left(\lambda \zeta_0\right)^{-1/2} \right)\|f\|_2^2 \|g\|_2^2
\end{equation*}
with
\begin{equation*}
\zeta_0=\lambda^{-\frac{1}{3}}\leq 2^{\frac{1}{3}d_2 l}2^{-\frac{1}{3}m}.
\end{equation*}
Notice that if $\sigma-\varrho-\mathfrak{k}\geq 0$ then $2^{(\sigma-\varrho-\mathfrak{k})(j-l)}\leq 1$; if
$\sigma-\varrho-\mathfrak{k}<0$ then $2^{(\sigma-\varrho-\mathfrak{k})(j-l)}\leq 2^{-(\sigma-\varrho-\mathfrak{k})l}$. Therefore
\begin{equation*}
\eqref{s4-5}\lesssim_{\mathscr{K}} 2^{(|\sigma-\varrho-\mathfrak{k}|+\frac{1}{3}d_2)l} 2^{-\frac{1}{3}m}\|f\|_2^2 \|g\|_2^2.
\end{equation*}
By using this bound and $\lambda^{1/2}2^{-m/2}\geq 2^{-d_2 l/2}$ we finally get \eqref{s7-1} with $\mathfrak{b}_1=(4d_2+3|\sigma-\varrho-\mathfrak{k}|)/6$. We have so far finished the proof of Case \ref{s5-subcase1} except the bounds \eqref{s7-2}--\eqref{s7-13}.

\begin{proof}[Proof of \eqref{s7-2}, \eqref{s7-3}, \eqref{s7-11} and \eqref{s7-13}]
Let us first prove the claimed lower bound \eqref{s7-2}. We start with computing some derivatives:
\begin{equation*}
\partial_{\xi}\Phi(\xi,\eta)=\partial_{\xi} \phi(\omega_0,\xi,\eta)=A C_1 \omega_0,
\end{equation*}
\begin{equation*}
\partial_{\xi\eta}^2 \left(2^{-m_0-k}\partial_{\xi}-\partial_{\eta}\right)\Phi(\xi, \eta)=A C_1 \partial_{\eta}\left(2^{-m_0-k}\partial_{\xi}\omega_0-\partial_{\eta}\omega_0\right)
\end{equation*}
and, by implicitly differentiating \eqref{s5-19},
\begin{equation*}
\partial_{\xi} \omega_0=-\frac{\partial^2_{\omega\xi}\phi}{\partial^2_{\omega\omega}\phi}(\omega_0, \xi, \eta)=-\frac{AC_1}{\partial^2_{\omega\omega}\phi(\omega_0, \xi, \eta)},
\end{equation*}
\begin{equation*}
\partial_{\eta} \omega_0=-\frac{\partial^2_{\omega\eta}\phi}{\partial^2_{\omega\omega}\phi}(\omega_0, \xi, \eta)
=-\frac{A b_{\sigma}+Q'(\omega_0)}{\partial^2_{\omega\omega}\phi(\omega_0, \xi, \eta)}=\frac{\xi}{\eta}\frac{AC_1}{\partial^2_{\omega\omega}\phi(\omega_0, \xi, \eta)}.
\end{equation*}
It is useful to observe that
\begin{equation}
\partial^2_{\omega\omega}\phi(\omega_0, \xi, \eta)=\eta Q''(\omega_0)=\eta\left(c_{\varrho}\nu(\nu-1)\omega_0^{\nu-2}+E''(\omega_0) \right)\asymp 1.\label{s7-18}
\end{equation}
Collecting these formulas yields a formula of the mixed derivative of \eqref{s7-2}, namely
\begin{align}
&\partial_{\xi\eta}^2 (2^{-m_0-k}\partial_{\xi}-\partial_{\eta})\Phi(\xi, \eta)\nonumber\\
&\quad =AC_1\partial_{\eta}\left(\frac{A(b_{\sigma}-a_{\sigma})+c_{\varrho}\nu\omega_0^{\nu-1}+E'(\omega_0)}{\partial^2_{\omega\omega}\phi(\omega_0, \xi, \eta)} \right) \label{s7-7}\\
&\quad =\frac{C_1 \eta}{(\partial^2_{\omega\omega}\phi(\omega_0, \xi, \eta))^3}\cdot A\cdot \Theta,\label{s7-10}
\end{align}
where
\begin{align*}
\Theta=&A^2\left(b_{\sigma}-a_{\sigma}\right)b_{\sigma}\left(c_{\varrho}\nu(\nu-1)(\nu-2)\omega_0^{\nu-3}+E'''\left(\omega_0\right) \right)  \\
      &+A\left(a_{\sigma}-2b_{\sigma}\right)\left(c_{\varrho}^2\nu^2(\nu-1)\omega_0^{2\nu-4}+ \Delta_1 \right)-c_{\varrho}^3\nu^4(\nu-1)\omega_0^{3\nu-5}+\Delta_2
\end{align*}
with terms $\Delta_1$ and $\Delta_2$ given by
\begin{equation*}
\begin{split}
\Delta_1=&-c_{\varrho}\nu(\nu-1)(\nu-2)\omega_0^{\nu-3}E'\left(\omega_0\right)+
2c_{\varrho}\nu(\nu-1)\omega_0^{\nu-2}E''\left(\omega_0\right)\\
&-c_{\varrho}\nu\omega_0^{\nu-1}E'''\left(\omega_0\right)+E''\left(\omega_0\right)^2-E'\left(\omega_0\right)E'''\left(\omega_0\right)
\end{split}
\end{equation*}
and
\begin{equation*}
\begin{split}
\Delta_2=&-2c_{\varrho}^2\nu^2(\nu-1)\omega_0^{2\nu-4}E'\left(\omega_0\right)
-4c_{\varrho}^2\nu^2(\nu-1)\omega_0^{2\nu-3}E''\left(\omega_0\right)\\
&+c_{\varrho}^2\nu^2\omega_0^{2\nu-2}E'''\left(\omega_0\right)
+c_{\varrho}\nu(\nu-1)(\nu-2)\omega_0^{\nu-3}E'\left(\omega_0\right)^2\\
&-2c_{\varrho}\nu\omega_0^{\nu-1}E''\left(\omega_0\right)^2
-4c_{\varrho}\nu(\nu-1)\omega_0^{\nu-2}E'\left(\omega_0\right)E''\left(\omega_0\right)\\
&+2c_{\varrho}\nu\omega_0^{\nu-1}E'\left(\omega_0\right)E'''\left(\omega_0\right)
-2E'\left(\omega_0\right)E''\left(\omega_0\right)^2+E'\left(\omega_0\right)^2 E'''\left(\omega_0\right).
\end{split}
\end{equation*}
It follows from size  estimates of derivatives of the error term $E$ that $\Delta_1, \Delta_2=O(2^{-|j-l|})$. Hence it is obvious that if $\Gamma_1$ is sufficiently large then
\begin{equation}
|\Theta|\asymp A=2^{(\sigma-\varrho)(j-l)} \quad \textrm{if $a_{\sigma}=b_{\sigma}$}  \label{s7-5}
\end{equation}
and
\begin{equation}
|\Theta|\asymp A^2=2^{2(\sigma-\varrho)(j-l)} \quad \textrm{if $\nu\neq 2$ and $a_{\sigma}\neq b_{\sigma}$}. \label{s7-6}
\end{equation}

When $\nu=2$ and $a_{\sigma}\neq b_{\sigma}$, it is not easy to determine the size of $\Theta$ based on its current form as a function of $\omega_0$. To resolve this problem we will express it as a rational function (in fact, essentially a polynomial if we only care about size estimate) of $T_0:=2^{j-l}t(\omega_0)$ where $t=t(\omega)\asymp 1$ is determined by \eqref{s5-23}.

By using \eqref{s5-19} we can simplify the numerator in \eqref{s7-7} and get
\begin{align}
&\partial_{\xi\eta}^2 (2^{-m_0-k}\partial_{\xi}-\partial_{\eta})\Phi(\xi, \eta)\nonumber\\
&\quad =A^2 C_1 a_{\sigma}\partial_{\eta}\left(\frac{P_2'-P_1'}{P_1'} \left(T_0\right)\frac{1}{\partial^2_{\omega\omega}\phi(\omega_0, \xi, \eta)} \right), \label{s7-19}
\end{align}
where we have used
\begin{equation}
\frac{\xi}{\eta}=-2^{-m_0-k}\frac{P_2'}{P_1'}\left(T_0\right) \label{s7-8}
\end{equation}
which follows from \eqref{s5-19} as well by using \eqref{s5-21}.

By differentiating \eqref{s5-22} and \eqref{s5-23} we get
\begin{equation*}
E''(\omega_0)=a_{\sigma}^2\frac{\mathcal{P}''P_1'-\mathcal{P}'P_1''}{P_1'^3}\left(T_0\right).
\end{equation*}
Thus by using formulas of $Q$ and $\mathcal{P}$ (namely, \eqref{s5-16} and \eqref{s5-25})
\begin{align*}
\mathfrak{q}\left(T_0\right):&=P_1'^3\left(T_0\right)Q''(\omega_0)=2c_{\varrho}P_1'^3\left(T_0\right)+
a_{\sigma}^2\left(\mathcal{P}''P_1'-\mathcal{P}'P_1''\right)\left(T_0\right)\\
                             &=a_{\sigma}^2\left(P_1'P_2''-P_1''P_2'\right)\left(T_0\right)
\end{align*}
is a polynomial of $T_0$ of degree $\leq d_1+d_2-3$ and $|\mathfrak{q}(T_0)|\asymp |P_1'^3(T_0)|\asymp T_0^{3\sigma-3}$. Let
\begin{equation*}
\mathfrak{p}\left(T_0\right):=\left(P_1'-P_2'\right)P_1'P_2'\left(T_0\right)
\end{equation*}
be a polynomial of $T_0$ of degree $\geq d_1+d_2-2$. Then by using \eqref{s7-19}, \eqref{s7-18} and \eqref{s7-8} we get
\begin{align}
\partial_{\xi\eta}^2 &\left(2^{-m_0-k}\partial_{\xi}-\partial_{\eta}\right)\Phi(\xi, \eta)=\frac{A^2 a_{\sigma}^2}{\xi}\partial_{\eta}\left(\frac{\mathfrak{p}}{\mathfrak{q}}\left(T_0\right) \right)\nonumber\\
&=\frac{a_{\sigma}^2 2^{(1-2\sigma)(j-l)}}{\xi \mathfrak{q}(T_0)^2}t'\left(\omega_0\right)\partial_{\eta} \omega_0\cdot \left( \mathfrak{p}'\mathfrak{q}-\mathfrak{p}\mathfrak{q}' \right)\left(T_0\right)\label{s7-12}\\
&\asymp  2^{(4-6\sigma)(j-l)}\left|\left( \mathfrak{p}'\mathfrak{q}-\mathfrak{p}\mathfrak{q}' \right)\left(T_0\right)\right|.\nonumber
\end{align}
Since $\deg(\mathfrak{q})<\deg(\mathfrak{p})$, the monomial having the largest power $\deg(\mathfrak{p})+\deg(\mathfrak{q})-1$ in the polynomial $\mathfrak{p}'\mathfrak{q}-\mathfrak{p}\mathfrak{q}'$ must have a nonzero coefficient. Therefore $\mathfrak{p}'\mathfrak{q}-\mathfrak{p}\mathfrak{q}'$ is a nontrivial polynomial whose coefficients only depend on $P_1$ and $P_2$. If $\Gamma_1$ is sufficiently large then $|T_0|\asymp 2^{j-l}$ is sufficiently small and
\begin{equation*}
|\mathfrak{p}'\mathfrak{q}-\mathfrak{p}\mathfrak{q}'|\asymp 2^{\mathfrak{x}(j-l)}
\end{equation*}
for some integer $0\leq \mathfrak{x}\leq \deg(\mathfrak{p})+\deg(\mathfrak{q})-1\leq 3d_1+3d_2-6$. To conclude, when $\nu=2$, $a_{\sigma}\neq b_{\sigma}$ and $\Gamma_1$ is sufficiently large then
\begin{equation}
\left|\partial_{\xi\eta}^2 \left(2^{-m_0-k}\partial_{\xi}-\partial_{\eta}\right)\Phi(\xi, \eta)\right|\asymp 2^{(4+\mathfrak{x}-6\sigma)(j-l)}.\label{s7-9}
\end{equation}
The \eqref{s7-10}, \eqref{s7-5}, \eqref{s7-6} and \eqref{s7-9} together lead to \eqref{s7-2}.

Upper bounds \eqref{s7-3} and \eqref{s7-11} are easy to prove. We differentiate \eqref{s7-12} for the case $\nu=2$ and $a_{\sigma}\neq b_{\sigma}$, and \eqref{s7-10} for other cases. After we check that $|t^{(i)}(\omega_0)|\asymp 1$ for $i=1,2,3$ and that $\frac{\partial^i \omega_0}{\partial \xi^i}\lesssim A^i$ for $i=1,2$, it is then routine to check the sizes of the outcome of differentiation.

It is similar but easier to check \eqref{s7-13}.
\end{proof}


\section{Case \ref{s5-subcase2} and \ref{s5-subcase3}} \label{sec6}

In this section we follow the strategy used in \cite{Li13} to prove desired bilinear estimates (\eqref{s4-6} or \eqref{s4-4}) for Case \ref{s5-subcase2} and \ref{s5-subcase3}. Since \cite{Li13}, such a strategy was often used to study variants of the bilinear Hilbert transform. For instance see \cite[Proposition 4.1]{GX16} for a bilinear estimate associated to a general curve $(t, \gamma(t))$, which is closely related to what we need here. This is why, in this paper, we transform the pair of polynomials into the form of $\{C_1\omega, Q(\omega)\}$---so that we can adopt the argument (with adjustment) from the aforementioned literature. A similar treatment can be found in \cite{DGR19}.


\subsection{A first estimate} \label{sec6-1}
As a first attempt we will use the $TT^*$ method and H\"ormander's \cite[Theorem 1.1]{Hormander73} to prove for some absolute constant $b>0$
\begin{equation}
\begin{split}
&\quad \left| \iint\!\! \widehat{f}(\xi) \widehat{g}(\eta) \widehat{h}\left(-\xi-2^{-m_0-k}\eta\right) K(\xi, \eta)  \,\textrm{d}\xi\textrm{d}\eta\right| \\
&\lesssim_{\mathscr{K}}  2^{bl}\left(2^{\frac{1}{6}m_0}+1 \right)2^{-\frac{1}{2}m}2^{-\frac{1}{6}m}\|f\|_2\|g\|_2\|h\|_2,
\end{split} \label{s6-14}
\end{equation}
which implies \eqref{s4-4} with a better decay factor  $2^{-m/6}$ (instead of $2^{-m/16}$) for Case \ref{s5-subcase2} already since
\begin{equation*}
2^{\frac{1}{6}m_0}+1\lesssim 2^{\frac{|\sigma_2-\sigma_1|}{6}l},
\end{equation*}
but is not enough for Case \ref{s5-subcase3}\footnote{We will complete the proof of Case \ref{s5-subcase3} in the next subsection.} since $m_0\gg 1$ and we cannot get rid of the $j$ (in $m_0$) trivially. The proof of \eqref{s6-14} is simply a repetition of the argument in Section \ref{sec5} with an easier estimation of derivatives, hence we provide only a sketch.

By inserting a partition of unity $\sum_s \chi_s (-C_1\xi/\eta)$ with $\# s \lesssim 1$ and using Lemma  \ref{s5-lemma1}, the contribution to the left side of \eqref{s6-14} can be separated into three parts to analyze. For that coming from the leading term of \eqref{s5-20} we will prove
\begin{equation}
\begin{split}
&\quad \lambda^{-\frac{1}{2}}\left| \iint\!\! \widehat{f}(\xi) \widehat{g}(\eta) \widehat{h}\left(-\xi-2^{-m_0-k}\eta\right) a\left(\xi, \eta \right) e\left(\lambda \Phi(\xi, \eta)\right)  \,\textrm{d}\xi\textrm{d}\eta\right|\\
&\lesssim_{\mathscr{K}} 2^{\frac{2}{3}d_2 l}2^{\frac{1}{6}m_0}2^{-\frac{1}{2}m}2^{-\frac{1}{6}m}\|f\|_2\|g\|_2\|h\|_2,
\end{split}\label{s6-15}
\end{equation}
where $\Phi(\xi, \eta)=\phi(\omega_0, \xi, \eta)$, $\omega_0$ is the critical point, and
\begin{equation*}
a\left(\xi, \eta \right)=\chi_s\left(-C_1\frac{\xi}{\eta}\right)\widetilde{\tau}(\omega_0)
\left|\partial^2_{\omega\omega}\phi(\omega_0, \xi,\eta)\right|^{-1/2}.
\end{equation*}
The contributions from \eqref{s5-18} and the error term  of \eqref{s5-20} are both bounded by $O(2^{d_2 l}2^{-m}\|f\|_2\|g\|_2\|h\|_2)$. \eqref{s6-14} then follows.

By using the same manipulation (see Section \ref{sec5}), \eqref{s6-15} is reduced to the estimation of \eqref{s4-5} with $F_{\zeta}$,  $G_{\zeta}$, $\psi_{\zeta}$ and  $P_{\zeta}$ having exactly the same forms. We claim that  if $\Gamma_1$ is chosen sufficiently large then
\begin{equation*}
\left|\partial_{\xi\eta}^2 \frac{P_{\zeta}}{\zeta (2^{-m_0}+1)}\right|\asymp_{\mathscr{K}} 1,
\end{equation*}
\begin{equation*}
\partial_{\xi\xi\eta}^3 \frac{P_{\zeta}}{\zeta(2^{-m_0}+1)}\lesssim_{\mathscr{K}} 1,
\end{equation*}
\begin{equation*}
\partial_{\xi\xi\xi\eta}^4 \frac{P_{\zeta}}{\zeta(2^{-m_0}+1)}\lesssim_{\mathscr{K}} 1
\end{equation*}
and
\begin{equation*}
\frac{\partial^i \psi_{\zeta}}{\partial \xi^i}\lesssim 1, i=0,1,2.
\end{equation*}
Indeed, these bounds follow from the following facts:
\begin{equation*}
\partial_{\xi\eta}^2 \left(2^{-m_0-k}\partial_{\xi}-\partial_{\eta}\right)\Phi(\xi, \eta)=C_1 \left(2^{-m_0-k}\partial^2_{\xi\eta}\omega_0-\partial^2_{\eta\eta}\omega_0\right);
\end{equation*}
\begin{equation*}
\partial^2_{\xi\eta}\omega_0=\frac{C_1}{\eta^2 Q''(\omega_0)^3}\left(Q''(\omega_0)^2-Q'(\omega_0)Q'''(\omega_0) \right)\asymp 1;
\end{equation*}
\begin{equation*}
\partial^2_{\eta\eta}\omega_0=\frac{C_1\xi}{\eta^3 Q''(\omega_0)^3}\left(Q'(\omega_0)Q'''(\omega_0)- 2Q''(\omega_0)^2\right)\asymp 1;
\end{equation*}
$\partial^3_{\xi\xi\eta}\omega_0$, $\partial^3_{\xi\eta\eta}\omega_0$, $\partial^4_{\xi\xi\xi\eta}\omega_0$ and  $\partial^4_{\xi\xi\eta\eta}\omega_0$ are all of size $O(1)$; $2^{-m_0}$ is either $\gg 1$ or $\ll 1$.

By applying trivial estimate and \cite[Theorem 1.1]{Hormander73}, we get
\begin{align*}
\eqref{s4-5}&\lesssim_{\mathscr{K}} \left(\zeta_0+\left(\lambda\zeta_0(2^{-m_0}+1)\right)^{-\frac{1}{2}}2^{\frac{1}{2}m_0} \right)\|f\|_2^2 \|g\|_2^2\\
            &\lesssim_{\mathscr{K}} \left(\zeta_0+\left(\lambda\zeta_0\right)^{-\frac{1}{2}}2^{\frac{1}{2}m_0} \right)\|f\|_2^2 \|g\|_2^2\\
            &\lesssim_{\mathscr{K}} 2^{\frac{1}{3}d_2 l}2^{\frac{1}{3}m_0} 2^{-\frac{1}{3}m}\|f\|_2^2 \|g\|_2^2,
\end{align*}
where we have chosen
\begin{equation*}
\zeta_0=\lambda^{-\frac{1}{3}} 2^{\frac{1}{3}m_0}\leq 2^{\frac{1}{3}m_0} 2^{\frac{1}{3}d_2 l} 2^{-\frac{1}{3}m}.
\end{equation*}
By using this bound of \eqref{s4-5} and $\lambda^{1/2}2^{-m/2}\geq 2^{-d_2 l/2}$ we get \eqref{s6-15}.


\subsection{A second estimate}\label{sec6-2}

As a second attempt we will use the $TT^*$ method and the third author's $\sigma$-uniformity method (see Appendix \ref{app2}) to prove \eqref{s4-6} for Case \ref{s5-subcase3}.

We first construct a partition of unity $\{\chi_s \}$, $\# s \lesssim 1$, associated to a finite open cover of $[-C_1 C_4, C_1 C_4]$ (with $C_4$ defined right before Lemma \ref{s5-lemma1}) by using open intervals of certain fixed length (depending on $P_1$ and $P_2$; see Lemma \ref{s5-lemma1}).

Let $\textbf{I}$ be either $[1, 2]$  or $[-2, -1]$ such that $\supp(\widehat{g})\subset \textbf{I}$. We will apply Lemma \ref{s5-lemma1} to $\chi_s(-C_1\xi/\eta)K(\xi, \eta)$ later for $C_4^{-1} \leq |\xi|\leq C_4$ and $\eta\in \textbf{I}$. Denote by $\mathfrak{S}$ the collection of all $s$'s for which the second statement in Lemma \ref{s5-lemma1} holds, and
\begin{equation*}
\mathfrak{U}(\textbf{I}):=\{u_{s, r, \xi}(\eta)\in L^2(\textbf{I}) : s\in \mathfrak{S},
r\in\mathbb{R}, C_4^{-1} \leq |\xi|\leq C_4\},
\end{equation*}
where
\begin{equation*}
u_{s, r, \xi}(\eta)=\chi_s(-C_1\xi/\eta)e\left(-\lambda \phi(\omega_0(\xi, \eta), \xi, \eta)-r\eta\right).
\end{equation*}

\textit{As a first step,} we let $\widehat{g}|_{\textbf{I}}$, the restriction of $\widehat{g}$ to $\textbf{I}$, be an arbitrary function in $L^2(\textbf{I})$ that is $\sigma$-uniform in $\mathfrak{U}(\textbf{I})$.

For simplicity we let
\begin{equation*}
B(f,g)(x)=\iint\!\! \widehat{f}(\xi) \widehat{g}(\eta) e\left( \left(\xi+2^{-m_0-k}\eta\right)x \right) K(\xi, \eta)  \,\textrm{d}\xi\textrm{d}\eta
\end{equation*}
denote the integral in \eqref{s4-6}, which in time space is equal to
\begin{equation}
B(f,g)(x)=\int\!\! f\left(x+\lambda C_1\omega\right) g\left(2^{-m_0-k}x+\lambda Q(\omega)\right)   \widetilde{\tau}(\omega)\,\textrm{d}\omega \label{s6-8}
\end{equation}
by \eqref{s5-9}. Localizing in $x$,
\begin{equation}
\int \!\! B(f,g)(x)h(x) \,\mathrm{d}x \label{s6-7}
\end{equation}
has the following decomposition
\begin{align*}
\eqref{s6-7}=\sum_{q\in\mathbb{Z}} &\iint \!\!
\left(\textbf{1}_{I_{q}}f\right)\left(x+\lambda C_1\omega\right)g\left(2^{-m_0-k}x+\lambda Q(\omega)\right)\cdot\\
&\widetilde{\tau}(\omega)\left(\textbf{1}_{2^{m_0+k}[q,q+1)}h\right)(x) \,\textrm{d}\omega\mathrm{d}x,
\end{align*}
where $I_{q}=[2^{m_0+k}q-\lambda C, 2^{m_0+k}(q+1)+\lambda C]$ for some proper constant $C>0$. In frequency space \eqref{s6-7} is equal to
\begin{equation*}
\sum_{q\in\mathbb{Z}}\! \iiint \!\!
\widehat{\textbf{1}_{I_{q}}f}(\xi)e\left(\xi x \right)\widehat{g}(\eta)e\!\left( 2^{-m_0-k}\eta x \right)\!K(\xi,\eta)\left(\textbf{1}_{2^{m_0+k}[q,q+1)}h\right)\!(x)\mathrm{d}x\textrm{d}\xi\textrm{d}\eta.
\end{equation*}
By using the power series of $e(2^{-m_0-k}\eta (x-2^{m_0+k}q))$, we then have
\begin{equation*}
\begin{split}
\eqref{s6-7}=\sum_{q\in\mathbb{Z}}\sum_{p=0}^{\infty} &\frac{(2\pi
i)^p}{p!}\iint \!\! \widehat{\textbf{1}_{I_{q}}f}(\xi) \widehat{g}(\eta)\eta^p e\left(q\eta \right) K(\xi,\eta)\cdot\\
&\mathcal{F}^{-1}\left[\left(2^{-m_0-k}\cdot-q\right)^p\left(\textbf{1}_{2^{m_0+k}[q,q+1)}h\right)(\cdot)\right](\xi)
\,\mathrm{d}\xi\mathrm{d}\eta.
\end{split}
\end{equation*}

If $|\xi| \notin [C_4^{-1}, C_4]$, \eqref{s5-24} gives $K(\xi,\eta)=O(\lambda^{-1})$. By H\"older's inequality, this portion of \eqref{s6-7} is
\begin{align}
&\lesssim \lambda^{-1}\|\widehat{g}\|_2 \sum_{q\in\mathbb{Z}}\left\|\textbf{1}_{I_{q}}f\right\|_2 \left\|\textbf{1}_{2^{m_0+k}[q,q+1)}h \right\|_2\nonumber\\
&\lesssim_{\mathscr{K}} \left(1+2^{-\frac{1}{2}m_0}\lambda^{\frac{1}{2}} \right)\lambda^{-1}\|f\|_2\|\widehat{g}\|_2\|h\|_2. \label{s6-17}
\end{align}

Hence we may next restrict the domain of $|\xi|$ in \eqref{s6-7} to $[C_4^{-1}, C_4]$ by properly adding a bump function $\widehat{\varphi_1}$. Furthermore by adding the partition of unity constructed earlier, we need to estimate, for each fixed $s$,
\begin{equation}
\begin{split}
\sum_{q\in\mathbb{Z}}\sum_{p=0}^{\infty}&\frac{(2\pi
i)^p}{p!}\iint \!\! \widehat{\textbf{1}_{I_{q}}f}(\xi)\widehat{\varphi_1}(\xi)\widehat{g}(\eta)\eta^p e\left(q\eta \right)\chi_s\left(-C_1\frac{\xi}{\eta}\right) K(\xi,\eta)\cdot\\
&\mathcal{F}^{-1}\left[\left(2^{-m_0-k}\cdot-q\right)^p\left(\textbf{1}_{2^{m_0+k}[q,q+1)}h\right)(\cdot)\right](\xi)
\,\mathrm{d}\xi\mathrm{d}\eta.
\end{split}\label{s6-4}
\end{equation}

If the first statement of Lemma \ref{s5-lemma1} holds then \eqref{s6-4} is also bounded by \eqref{s6-17}.

If the second statement holds then \eqref{s6-4} is reduced to
\begin{equation}
\begin{split}
\lambda^{-1/2}\sum_{q\in\mathbb{Z}}\sum_{p=0}^{\infty}&\frac{(2\pi
i)^p}{p!}\int \! \mathfrak{M}(\xi) \widehat{\textbf{1}_{I_{q}}f}(\xi)\cdot\\
&\mathcal{F}^{-1}\left[\left(2^{-m_0-k}\cdot-q\right)^p\left(\textbf{1}_{2^{m_0+k}[q,q+1)}h\right)(\cdot)\right](\xi)
\,\mathrm{d}\xi,
\end{split}\label{s6-5}
\end{equation}
where we have omitted the error term of \eqref{s5-20} (since it leads to the bound \eqref{s6-17} as well), and
\begin{equation*}
\mathfrak{M}(\xi)=\int_{\textbf{I}} \! b(\xi,
\eta)\widehat{g}(\eta)\chi_s(-C_1\xi/\eta) e\left(\lambda \phi(\omega_0(\xi, \eta), \xi, \eta)+q\eta \right) \,\mathrm{d}\eta
\end{equation*}
with
\begin{equation*}
b(\xi,\eta)=\widehat{\varphi_1}(\xi)\eta^p \widetilde{\tau}(\omega_0(\xi, \eta))|\eta Q''(\omega_0(\xi, \eta))|^{-1/2}.
\end{equation*}
Applying the Fourier series of $b(\xi, \eta)$ and the assumption that
$\widehat{g}|_{\textbf{I}}$ is $\sigma$-uniform in $\mathfrak{U}(\textbf{I})$ yields
\begin{equation*}
|\mathfrak{M}(\xi)|\lesssim 9^{p}\sigma\|\widehat{g}\|_2.
\end{equation*}
Hence
\begin{equation*}
|\eqref{s6-5}|\lesssim_{\mathscr{K}} \sigma\left(1+2^{-\frac{1}{2}m_0}\lambda^{\frac{1}{2}} \right)\lambda^{-\frac{1}{2}} \|f\|_2 \|\widehat{g}\|_2\|h\|_2.
\end{equation*}

Based on the above analysis (especially \eqref{s6-17} and the bound of \eqref{s6-5}), we can draw a conclusion that if $\widehat{g}|_{\textbf{I}}$ is $\sigma$-uniform in $\mathfrak{U}(\textbf{I})$ and $\sigma>\lambda^{-1/2}$ then
\begin{equation}
\begin{split}
&\quad \left|\int \!\! B(f,g)(x)\textbf{1}_{[0,2^{m+m_0+k}]}(x)h(x) \,\mathrm{d}x\right|  \\
&\lesssim_{\mathscr{K}} \sigma\left(1+2^{-\frac{1}{2}m_0}\lambda^{\frac{1}{2}} \right) 2^{\frac{m+m_0}{2}}\lambda^{-\frac{1}{2}} \|f\|_2\|\widehat{g}\|_2\|h\|_{\infty}.
\end{split}\label{s6-6}
\end{equation}

\textit{As a second step,} we now  assume that $\widehat{g}|_{\textbf{I}}\in \mathfrak{U}(\textbf{I})$. By using \eqref{s6-8}, changing variables $x\rightarrow \lambda 2^{m_0+k}x-\lambda C_1\omega$ and H\"older's inequality, we have
\begin{equation*}
|\eqref{s6-7}|\leq \lambda^{\frac{1}{2}}2^{\frac{m_0+k}{2}}\|f\|_2\|T(h)\|_2,
\end{equation*}
where
\begin{equation*}
T(h)(x)=\!\!\int  \!\! g\left(\lambda \left(x-2^{-m_0-k}C_1\omega+Q(\omega)\right)\right)
h\left(\lambda 2^{m_0+k}x-\lambda C_1\omega\right)\widetilde{\tau}(\omega)\textrm{d}\omega.
\end{equation*}
Let $\widehat{g}|_{\textbf{I}}(\eta)=u_{s, r, \xi}(\eta)$ for arbitrarily fixed $s\in \mathfrak{S}$,
$r\in\mathbb{R}$ and  $C_4^{-1} \leq |\xi|\leq C_4$. By the Fourier inversion and changing variables we get
\begin{equation}
\|T(h)\|_2^2=\!\int \! \left|\!\int  \!\!  K_{1}(x, \omega)  h\left(\lambda 2^{m_0+k}x+r2^{m_0+k}-\lambda C_1\omega\right) \widetilde{\tau}(\omega)\,\textrm{d}\omega\right|^2 \! \mathrm{d}x, \label{s6-19}
\end{equation}
where
\begin{equation}
K_{1}(x, \omega)=\int_{\textbf{I}} \! \chi_s\left(-C_1\frac{\xi}{\eta}\right)e\left(-
\lambda \left(\phi(\omega_0(\xi, \eta), \xi, \eta)-y(x, \omega)\eta \right)\right) \, \textrm{d} \eta \label{s6-9}
\end{equation}
with
\begin{equation*}
y(x, \omega)=x-2^{-m_0-k}C_1\omega+Q(\omega).
\end{equation*}

We split \eqref{s6-19} into two parts with respect to $x$. When $|x|$ is sufficiently large, integration by parts gives $K_{1}(x, \omega)=O(\lambda^{-1}|x|^{-1})$. Hence the part for large $|x|$ is of size
\begin{equation}
O\left(\lambda^{-2}\|h\|_{\infty}^2 \right). \label{s6-20}
\end{equation}

As to the part for $|x|<M$ for some fixed large constant $M$, since the second derivative with respect to $\eta$ of the phase function of $K_{1}(x, \omega)$ is $\asymp 1$, we can argue similarly as in Lemma \ref{s5-lemma1} and Section \ref{sec5} and assume without loss of generality that there exists only one critical point $\eta_0=\eta_0(x, \omega,\xi)\in \textbf{I}$ whose defining equation is
\begin{equation*}
Q\left(\omega_0(\xi, \eta_0)\right)-y(x, \omega)=0,
\end{equation*}
otherwise integration by parts produces the bound \eqref{s6-20} again. Recall that $\omega_0(\xi,\eta)$ satisfies $C_1\xi+Q'(\omega_0(\xi,\eta))\eta=0$. Hence
\begin{equation*}
\eta_0=-\frac{C_1\xi}{Q'(Q^{-1}(y(x, \omega)))}.
\end{equation*}
By using the method of stationary phase and integration by parts we get
\begin{equation*}
\begin{split}
K_{1}(x, \omega)=&C\chi_s\left(-\frac{C_1\xi}{\eta_0}\right) \left| C_1\xi \cdot \frac{Q''}{Q'^3}\left(Q^{-1}(y(x, \omega))\right) \right|^{1/2}\cdot \\
   &e\left(-\lambda C_1\xi Q^{-1}\left(y(x, \omega)\right) \right)\lambda^{-1/2}+O\left(\lambda^{-1}\right)
\end{split}
\end{equation*}
with an absolute constant $C$. Hence $\|T(h)\|_2^2$ is reduced to
\begin{equation}
\begin{split}
\lambda^{-1}\int \! \chi_M(x)\bigg| \! \int\! &h\left(\lambda 2^{m_0+k}x+r2^{m_0+k}-\lambda C_1\omega\right)\cdot \\
&k(x, \omega)e\left(-\lambda C_1\xi Q^{-1}\left(y(x, \omega)\right) \right)
\,\mathrm{d}\omega\bigg|^2 \,\mathrm{d}x,
\end{split}\label{s6-10}
\end{equation}
where $\chi_M$ is a standard bump function supported in $[-M, M]$ and
\begin{equation*}
k(x, \omega)= \chi_s\left(-\frac{C_1\xi}{\eta_0}\right) \left| \frac{Q''}{Q'^3}\left(Q^{-1}(y(x, \omega))\right) \right|^{1/2}\widetilde{\tau}(\omega).
\end{equation*}
By using the $TT^*$ method and changing variables $\omega_1=\upsilon+\zeta$, $\omega_2=\upsilon$ and
$x\rightarrow x+2^{-m_0-k}C_1\upsilon$, we have
\begin{equation*}
\eqref{s6-10}=\lambda^{-1}\int \textrm{d}\zeta \int \!\! H_{\zeta}(x) \,\mathrm{d}x \int \!\!
K_{\zeta, x}(\upsilon)e\left(-\lambda C_1\xi P_{\zeta, x}(\upsilon) \right)
\,\mathrm{d}\upsilon,
\end{equation*}
where all three integrals are over some finite intervals,
\begin{equation*}
H_{\zeta}(x)=h\left(\lambda 2^{m_0+k}x+r2^{m_0+k}-\lambda C_1\zeta\right)\overline{h\left(\lambda 2^{m_0+k}x+r2^{m_0+k}\right)},
\end{equation*}
\begin{equation*}
K_{\zeta,
x}(\upsilon)\!=\!\chi_M\!\left(x+2^{-m_0-k}C_1\upsilon\right)\!k(x+2^{-m_0-k}C_1\upsilon,
\upsilon+\zeta)\overline{k(x+2^{-m_0-k}C_1\upsilon, \upsilon)}          
\end{equation*}
and
\begin{equation*}
P_{\zeta, x}(\upsilon)=\Phi(x-2^{-m_0-k}C_1\zeta, \upsilon+\zeta)-\Phi(x, \upsilon)
\end{equation*}
with
\begin{equation*}
\Phi(x,\upsilon)=Q^{-1}(x+Q(\upsilon)).
\end{equation*}

We have that if $2^{-m_0-k}/|x|$ is sufficiently small then
\begin{equation}
\left|D_{\upsilon}\frac{P_{\zeta, x}}{|x||\zeta|}\right|\asymp 1\label{s6-12}
\end{equation}
and
\begin{equation}
D^2_{\upsilon\upsilon}\frac{P_{\zeta, x}}{|x||\zeta|}\lesssim 1.
\label{s6-13}
\end{equation}
Indeed, the \eqref{s6-12} follows from
\begin{equation*}
\frac{\partial^2 \Phi}{\partial x\partial \upsilon}(x,\upsilon)=-Q'(\upsilon)\frac{Q''}{Q'^3}\left(Q^{-1}(x+Q(\upsilon))\right)
\asymp 1,
\end{equation*}
\begin{align*}
\frac{\partial^2 \Phi}{\partial \upsilon^2}(x,
\upsilon)&=\frac{Q''(\upsilon)}{Q'(Q^{-1}(\theta'))}\cdot \frac{Q''}{Q'^3}\left(Q^{-1}(x+Q(\upsilon))\right)\cdot
\frac{Q'(2Q''^2-Q'Q''')}{Q''^2}(\theta)\cdot x\\
&\asymp |x|,
\end{align*}
where $\theta$ and $Q^{-1}(\theta')$ are both between $Q^{-1}(x+Q(\upsilon))$ and $\upsilon$, and the fact
\begin{equation*}
x+Q(\upsilon)=y\left(x+2^{-m_0-k}C_1\upsilon, \upsilon\right)=Q\left(\omega_0\left(\xi, \eta_0\left(x+2^{-m_0-k}C_1\upsilon, \upsilon,\xi\right)\right)\right).
\end{equation*}
The \eqref{s6-13} can be proved similarly.

Hence if $2^{-m_0-k}/|x|$ is sufficiently small, for any $\mathfrak{r}<1$ we have
\begin{equation*}
\int \!\! K_{\zeta, x}(\upsilon)e\left(-\lambda C_1\xi P_{\zeta, x}(\upsilon) \right)
\,\mathrm{d}\upsilon\lesssim \min\{1, (\lambda |x||\zeta|)^{-1}\}\leq (\lambda |x||\zeta|)^{-\mathfrak{r}}.
\end{equation*}
By splitting \eqref{s6-10} into two parts with respect to $x$ and applying trivial estimate and the above bound respectively, we get
\begin{equation*}
|\eqref{s6-10}|\lesssim_{\mathscr{K}, \mathfrak{r}} \lambda^{-1}\left( 2^{-m_0}+\lambda^{-\mathfrak{r}} \right)\|h\|_{\infty}^2.
\end{equation*}

To conclude the second step, we have shown that if $\widehat{g}|_{\textbf{I}}\in \mathfrak{U}(\textbf{I})$ then
\begin{equation}
\begin{split}
&\quad \left|\int \!\! B(f,g)(x)\textbf{1}_{[0,2^{m+m_0+k}]}(x)h(x) \,\mathrm{d}x\right|  \\
&\lesssim_{\mathscr{K}, \mathfrak{r}} 2^{\frac{1}{2}m_0} \left(2^{-\frac{1}{2}m_0}+\lambda^{-\frac{1}{2}\mathfrak{r}}\right) \|f\|_2\|h\|_{\infty}.
\end{split} \label{s6-18}
\end{equation}

\textit{As a final step,} we take advantage of the bounds \eqref{s6-6} and \eqref{s6-18} from the above two steps, discuss in several cases and apply Lemma \ref{app2-1} (with $\sigma$ properly chosen) to conclude Case \ref{s5-subcase3}.

If $2^{-m_0}\leq \lambda^{-1}$ then
\begin{equation*}
\left\| B(f,g)\right\|_{L^1_x\left([0,2^{m+m_0+k}]\right)}\lesssim_{\mathscr{K}} 2^{\frac{1}{2}d_2 l} 2^{\frac{1}{2}m_0}2^{-\frac{\mathfrak{r}}{4} m}\|f\|_2\|g\|_2,
\end{equation*}
which obviously ensures \eqref{s4-6}.

If $\lambda^{-1}<2^{-m_0}<\lambda^{-\mathfrak{r}}$ then
\begin{equation*}
\left\| B(f,g)\right\|_{L^1_x\left([0,2^{m+m_0+k}]\right)}\lesssim_{\mathscr{K}} 2^{\frac{1}{2}d_2 l} 2^{\frac{1}{2}m_0}2^{\frac{1-2\mathfrak{r}}{4} m}\|f\|_2\|g\|_2,
\end{equation*}
which obviously ensures \eqref{s4-6} as well.

If $2^{-m_0}\geq \lambda^{-\mathfrak{r}}$ then
\begin{equation*}
\left\| B(f,g)\right\|_{L^1_x\left([0,2^{m+m_0+k}]\right)}\lesssim_{\mathscr{K}} 2^{\frac{1}{2}m_0}\left(2^{-\frac{1}{2}m_0}2^{\frac{1}{4} m}\right)\|f\|_2\|g\|_2.
\end{equation*}
Note also that \eqref{s6-14} implies
\begin{equation*}
\left\| B(f,g)\right\|_{L^1_x\left([0,2^{m+m_0+k}]\right)}\lesssim_{\mathscr{K}} 2^{\frac{1}{2}m_0}\left(2^{\frac{1}{6}m_0}2^{-\frac{1}{6} m}\right)\|f\|_2\|g\|_2.
\end{equation*}
Balancing these two bounds yields the factor $2^{m_0/2}2^{-m/16}$ in \eqref{s4-6} for Case \ref{s5-subcase3}, hence finishes the proof of Proposition \ref{s2-prop1}.


\section{Proof of Theorem \ref{thm2}}\label{sec7}

To prove Theorem \ref{thm2}, we first observe that it suffices to prove that, given two linearly independent polynomials $P_1$ and $P_2$ such that $P_1(0)=P_2(0)=0$ and $P_1$ is strictly increasing on an interval $(0, c)$ for some small constant $c>0$, the given set $E$ contains a triplet
\begin{equation*}
x, x+t, x+\gamma(t)
\end{equation*}
for some $0<t<P_1(c)$, where we define
\begin{equation*}
\gamma(t)=P_2\circ P_1^{-1}(t) \chi(t),
\end{equation*}
where $\chi(t)=0$ if $t<0$; $=1$ if $0\leq t\leq P_1(c)$; $=0$ if $t\geq 2P_1(c)$; is smooth away from the origin. In particular, $\gamma$ is continuous on $\mathbb{R}$ and exactly the composition of $P_2$ and the inverse of $P_1$ on $[0, P_1(c)]$. The strictly decreasing $P_1$ case follows easily.

Formally this problem is the same as those considered in \cite{LP09, FGP19}. It is not hard to check that it can be proved by following the argument in \cite{LP09, FGP19} without too many changes provided one can generalize the Sobolev improving estimate in \cite{FGP19} from a polynomial $P$ to the above $\gamma$. Hence we will focus on this generalization below.

Let $\vartheta$ be a nonnegative smooth bump function supported in $[1,2]$ with integral $1$. Let $\vartheta_l(t)=2^l\vartheta(2^l t)$. For $l\in\mathbb{N}$ with $2^{1-l}<P_1(c)$ and $f$, $g$ in the Schwartz space $\mathscr{S}(\mathbb{R})$, we set
\begin{equation*}
T_l(f,g)(x):=\int_{\mathbb{R}} f(x+t)g(x+\gamma(t))\vartheta_l(t)\,\mathrm{d}t, \quad x\in \mathbb{R}.
\end{equation*}
With the above definitions we will prove the following Sobolev improving estimate involving the Sobolev norm
\begin{equation*}
\|f\|_{H^s}:=\left(\int_{\mathbb{R}}\left|\widehat{f}(\xi)\right|^2\left(1+|\xi|^2\right)^s\mathrm{d}\xi\right)^{1/2}.
\end{equation*}

\begin{proposition}\label{s8-prop1}
There exist constants $L$ and $\mathfrak{b}$ depending only on $P_1$ and $P_2$ such that for any $0<s<1/18$ we have
\begin{equation*}
\|T_{\sigma_1 l}(f,g)\|_{H^{s}}\lesssim_{s} 2^{\mathfrak{b} l}\|f\|_{H^{-s}}\|g\|_{H^{-s}}
\end{equation*}
whenever $l\geq L$ and $f, g\in \mathscr{S}(\mathbb{R})$.
\end{proposition}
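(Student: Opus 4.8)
The plan is to reduce the Sobolev improving estimate to the bilinear oscillatory estimates already proved in Proposition~\ref{s2-prop1}, exploiting the observation of Remark~\ref{s2-rm1} that for the relevant frequency configuration ($j-l\le -\Gamma_1$) linear independence of $P_1,P_2$ alone suffices and the decay gain is $2^{-m/6}$ rather than $2^{-m/16}$. First I would unwind the definition: after the substitution $t=P_1(s)$ that makes the first curve linear (this is exactly the change of variables used to pass from $\{P_1(t),P_2(t)\}$ to $\{C_1\omega,Q(\omega)\}$ in Section~\ref{sec4}), the operator $T_{\sigma_1 l}$ becomes, up to harmless smooth weights and errors of the type controlled in Section~\ref{sec4}, an averaging operator along the curve $(\omega,\gamma)$ with $\gamma=P_2\circ P_1^{-1}$, localized at scale $2^{-l}$. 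The task is then to show $\|T_{\sigma_1 l}(f,g)\|_{H^s}\lesssim 2^{\mathfrak b l}\|f\|_{H^{-s}}\|g\|_{H^{-s}}$ for $0<s<1/18$.

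Next I would dualize and decompose dyadically on all three frequency variables. Writing $f=\sum_{m_1}\mathbb P_{m_1}f$, $g=\sum_{m_2}\mathbb P_{m_2}g$ and testing $T_{\sigma_1 l}(f,g)$ against $h=\sum_{m_3}\mathbb P_{m_3}h$, I would interpolate between the trivial $L^2\times L^2\to L^\infty$-type bound (which costs $2^l$ from the normalization of $\vartheta_l$) and the genuine decay in the highest frequency supplied by Proposition~\ref{s2-prop1}. Matching the Sobolev weights, one needs to absorb $2^{s(m_1+m_2)}$ on the right and produce $2^{s m_3}$ on the left; since at most two of $m_1,m_2,m_3$ can be comparable to the maximum while the third is much smaller, in the generic ``high--high--high'' regime $m_1\asymp m_2\asymp m_3\asymp m$ the gain $2^{-m/6}$ from Proposition~\ref{s2-prop1} (via Remark~\ref{s2-rm1}) beats $2^{3sm}$ precisely when $3s<1/6$, i.e.\ $s<1/18$. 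The low-frequency and unbalanced regimes are handled by repeated integration by parts in the oscillatory kernel \eqref{s5-1}, exactly as in the passage surrounding \eqref{s5-24}, producing arbitrary polynomial decay that trivially dominates the finitely many Sobolev factors; the constant $\mathfrak b$ collects the powers of $2^l$ from these elementary steps and from the $2^{\mathfrak b_1 l}$, $2^{b l}$ losses in Case~\ref{s5-subcase1} and Subsection~\ref{sec6-1}.

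The summation over the dyadic pieces converges geometrically because the net exponent $-m/6+3sm$ is strictly negative and the off-diagonal terms decay super-polynomially, so Schur's test (or a direct geometric summation) closes the estimate with the claimed constants $L$ and $\mathfrak b$ depending only on $P_1$ and $P_2$ through $\Gamma_1$, the degrees, and the coefficient bounds.

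The main obstacle I anticipate is bookkeeping the change of variables $t=P_1(s)$ together with the support localization $\vartheta_{\sigma_1 l}$: one must verify that after this substitution the amplitude and its derivatives remain bounded uniformly in $l$ (so that the error polynomial $E(\omega)$ of \eqref{s5-17} and the weight $\widetilde\tau$ behave as in Section~\ref{sec4}), and that the frequency localization of $f$ in the original variable is only mildly distorted. Once this is set up, the estimate is a clean interpolation; the only quantitative point requiring care is confirming that the exponent $1/6$ from Remark~\ref{s2-rm1} is genuinely available here (it is, since $j-l\le -\Gamma_1$ in the relevant range and $P_1,P_2$ are assumed merely linearly independent), which is exactly what forces the threshold $s<1/18$ and hence, downstream, the range $8/9<\beta\le 1$ in Theorem~\ref{thm2}.
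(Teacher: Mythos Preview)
Your proposal is correct and follows essentially the same route as the paper: dualize against $h$, perform a dyadic Littlewood--Paley decomposition on all three functions, dispose of the off-diagonal pieces by a single integration by parts in $\mathfrak m_l$, handle the low--low block trivially, and on the diagonal $k_1\asymp k_2$ invoke \eqref{s4-4} (with the $2^{-m/6}$ gain from Remark~\ref{s2-rm1}) after rescaling so that the kernel becomes \eqref{s5-1} with $j=0$; the geometric sum then converges exactly when $3s<1/6$. The only cosmetic difference is that the paper does not literally interpolate but simply estimates each frequency regime directly, and one integration by parts (not repeated) already suffices off-diagonal.
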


\begin{remark}
Recall that $P_1(t)$ and $P_2(t)$ are denoted by \eqref{s2-8} and \eqref{s2-9}. The number $\sigma_1$ is the smallest power in $P_1$. Actually this proposition still holds if $\sigma_1$ is removed from its statement. However its current form is good enough for our purpose and its proof is a direct application of our work in previous sections.

Instead of assuming $d_1<d_2$ (for Theorem \ref{thm1}), we only assume $P_1$ and $P_2$ are linearly independent (for Theorem \ref{thm2}). This difference is due to that we consider small $t$ and use results from Case \ref{s5-subcase1} and \ref{s5-subcase2} only.
\end{remark}

\begin{proof}[Proof of Proposition \ref{s8-prop1}]
Define projections $\{\mathfrak{P}_{k}^{cl}\}_{k\in \mathbb{Z}_+}$  by
\begin{equation*}
\widehat{\mathfrak{P}_{k}^{cl}f}(\xi)=\widehat{f}(\xi)\textbf{1}_{(2^{k-1},2^k]}\left(|\xi|2^{-cl}\right), \quad \textrm{if $k\geq 1$},
\end{equation*}
and
\begin{equation*}
\widehat{\mathfrak{P}_{0}^{cl}f}(\xi)=\widehat{f}(\xi)\textbf{1}_{(0, 1]}\left(|\xi|2^{-cl}\right).
\end{equation*}
We will omit the superscript $cl$ in $\mathfrak{P}_{k}^{cl}$ when $c=0$.

For $h\in \mathscr{S}(\mathbb{R})$ we consider
\begin{equation}
\iint\! f(x+\omega)g(x+\gamma(\omega))h(x)\vartheta_{\sigma_1 l}(\omega)\,\mathrm{d}\omega\mathrm{d}x.    \label{s8-2}
\end{equation}
Using the above projections and the Fourier inversion, we have
\begin{equation*}
\eqref{s8-2}=\sum_{k_1,k_2,k_3\in\mathbb{Z}_+}\!\!\! \iint\! \widehat{\mathfrak{P}_{k_1}^{\sigma_1 l}f}(\xi)\widehat{\mathfrak{P}_{k_2}^{\sigma_2 l}g}(\eta)\widehat{\mathfrak{P}_{k_3}h}(-\xi-\eta)\mathfrak{m}_l(\xi,\eta)\,\mathrm{d}\xi\mathrm{d}\eta,
\end{equation*}
where\footnote{The counterpart of this $\mathfrak{m}_l(\xi,\eta)$ in previous sections is \eqref{s4-2} with $j=0$.}
\begin{equation*}
\mathfrak{m}_l(\xi,\eta)=\int
\! \vartheta(\omega) e\left(\xi 2^{-\sigma_1 l}\omega+\eta\gamma\left(2^{-\sigma_1 l}\omega\right)\right)\,\mathrm{d}\omega.
\end{equation*}
We may further assume that $0\leq k_3\leq \max\{k_1,k_2\}+(\sigma_1+\sigma_2)l+1$ otherwise $\widehat{\mathfrak{P}_{k_3}h}(-\xi-\eta)$ vanishes.

We will split \eqref{s8-2} into several parts (depending on sizes of $k_1$, $k_2$) and then estimate them one by one. We use $\mathfrak{b}$ to represent a constant depending on $P_1$ and $P_2$, which may be different from line to line.

First, we consider the part with $k_1\geq \mathscr{K}$ and $0\leq k_2\leq k_1-\mathscr{K}$ where $\mathscr{K}\in\mathbb{N}$ is a sufficiently large constant such that
\begin{equation*}
\partial_{\omega}\left(\xi 2^{-\sigma_1 l}\omega+\eta\gamma\left(2^{-\sigma_1 l}\omega\right)\right) \asymp 2^{k_1}
\end{equation*}
when $|\xi|2^{-\sigma_1 l}\asymp 2^{k_1}$ and $|\eta|2^{-\sigma_2 l}\leq 2^{k_2}$. An integration by parts then yields
\begin{equation*}
\mathfrak{m}_l(\xi,\eta)\lesssim 2^{-k_1}.
\end{equation*}
Using this estimate and H\"older's inequality, we have the first part bounded by
\begin{align*}
&\sum_{k_1\geq \mathscr{K}}\!\!\! 2^{-\frac{1}{2}k_1+\mathfrak{b}l}\!\!\left(\!\int_{|\xi|\leq 2^{k_1+\sigma_1 l}}\!\! |\widehat{f} |^2  \!\!\right)^{\!\frac{1}{2}}\!\left(\!\int_{|\eta|\leq 2^{k_1+\sigma_2 l-\mathscr{K}}}\! \!|\widehat{g} |^2   \!\!\right)^{\!\frac{1}{2}}\!\left(\!\int_{|\xi|\leq 2^{k_1+(\sigma_1+\sigma_2) l+1}}\! \!|\widehat{h}|^2 \!\!\right)^{\!\frac{1}{2}}\\
&\lesssim 2^{\mathfrak{b}l}\sum_{k_1\geq \mathscr{K}} 2^{-(\frac{1}{2}-3s)k_1}\|f\|_{H^{-s}}\|g\|_{H^{-s}}\|h\|_{H^{-s}}\\
&\lesssim 2^{\mathfrak{b}l}\|f\|_{H^{-s}}\|g\|_{H^{-s}}\|h\|_{H^{-s}}.
\end{align*}
It is easy to check that the part with $k_2\geq \mathscr{K}$ and $0\leq k_1\leq k_2-\mathscr{K}$ can be handled similarly.

Second, by H\"older's inequality the part with $k_1=0$ and $0\leq k_2<\mathscr{K}$ or $k_2=0$ and $0\leq k_1<\mathscr{K}$ is bounded by
\begin{align*}
&\quad 2^{\mathfrak{b}l}\left(\!\int_{|\xi|\leq 2^{\mathscr{K}+\sigma_1 l}}\! |\widehat{f} |^2  \!\right)^{\!\frac{1}{2}}\!\left(\!\int_{|\eta|\leq 2^{\mathscr{K}+\sigma_2 l}}\! |\widehat{g} |^2   \!\right)^{\!\frac{1}{2}}\!\left(\!\int_{|\xi|\leq 2^{\mathscr{K}+(\sigma_1+\sigma_2) l+1}}\! |\widehat{h}|^2 \!\right)^{\!\frac{1}{2}}\\
&\lesssim 2^{\mathfrak{b}l} \|f\|_{H^{-s}}\|g\|_{H^{-s}}\|h\|_{H^{-s}}.
\end{align*}

Third, it remains to estimate the part with $k_1,k_2\geq 1$ and $|k_1-k_2|<\mathscr{K}$, namely
\begin{equation}
\sum_{|k|<\mathscr{K}}\sum_{\substack{k_1=k_2+k\geq 1 \\k_2\geq 1, k_3\in \mathbb{Z}_+}} \iint\! \widehat{\mathfrak{P}_{k_1}^{\sigma_1 l}f}(\xi)\widehat{\mathfrak{P}_{k_2}^{\sigma_2 l}g}(\eta)\widehat{\mathfrak{P}_{k_3}h}(-\xi-\eta)\mathfrak{m}_l(\xi,\eta)\,\mathrm{d}\xi\mathrm{d}\eta.\label{s8-3}
\end{equation}
Let us denote
\begin{equation*}
F=\mathfrak{P}_{k_1}^{\sigma_1 l}f, \, G=\mathfrak{P}_{k_2}^{\sigma_2 l}g, \, H=\mathfrak{P}_{k_3}h,
\end{equation*}
\begin{equation*}
c_1=2^{k_1+\sigma_1 l-1}, \, c_2=2^{k_2+\sigma_2 l-1}
\end{equation*}
and set $\phi_c(x)=c^{-1}\phi(c^{-1}x)$. After changing variables the integral in \eqref{s8-3} is equal to
\begin{equation}
c_1 c_2 \iint\! \widehat{F_{c_1}}(\xi)\widehat{G_{c_2}}(\eta)\widehat{H_{c_1}}(-\xi-2^{-m_0-k}\eta)
K(\xi,\eta)\,\mathrm{d}\xi\mathrm{d}\eta,\label{s8-4}
\end{equation}
where $m_0=(\sigma_1-\sigma_2)l$ and
\begin{equation}
K(\xi,\eta)=\int \! \vartheta(\omega) e\left(2^{k_2+\sigma_2 l-1}\left( 2^{m_0+k}2^{-\sigma_1 l}\omega\xi+\gamma\left(2^{-\sigma_1 l}\omega\right)\eta\right)\right)\,\mathrm{d}\omega. \label{s8-5}
\end{equation}
Note that $\supp(\widehat{F_{c_1}})$ and $\supp(\widehat{G_{c_2}})$ are now both contained in $\{\xi\in\mathbb{R} :1\leq |\xi|\leq 2 \}$.  Via a substitution $P_1(2^{-l}t)=2^{-\sigma_1 l}\omega$ we note also that the kernel \eqref{s8-5} is essentially just \eqref{s5-1} with $j=0$ and $m=k_2+\sigma_2 l-1$.  Hence if $l\geq L$ for a sufficiently large $L=L(P_1, P_2)$ then $j-l=-l\leq -L$ and \eqref{s4-4} (together with Remark \ref{s2-rm1}) implies
\begin{equation*}
|\eqref{s8-4}|\lesssim_{\mathscr{K}} 2^{\mathfrak{b}l} 2^{-\frac{1}{6}k_2}\|\mathfrak{P}_{k_1}^{\sigma_1 l}f\|_2\|\mathfrak{P}_{k_2}^{\sigma_2 l}g\|_2\|\mathfrak{P}_{k_3}h\|_2.
\end{equation*}
(Instead of transforming \eqref{s8-5} into the form of \eqref{s5-1} and then applying \eqref{s4-4} directly, one can also obtain the above bound by following the steps in Subsection \ref{s5-subcase1} and \ref{s5-subcase2} to write the kernel in the form of \eqref{s5-6} and \eqref{s5-3} and then applying results from Section \ref{sec5} and Subsection \ref{sec6-1}.) Using this bound of \eqref{s8-4} we readily get
\begin{align*}
|\eqref{s8-3}|&\lesssim 2^{\mathfrak{b}l} \sum_{k_2\geq 1} 2^{-3(\frac{1}{18}-s)k_2}\|f\|_{H^{-s}}\|g\|_{H^{-s}}\|h\|_{H^{-s}}\\
&\lesssim_s 2^{\mathfrak{b}l} \|f\|_{H^{-s}}\|g\|_{H^{-s}}\|h\|_{H^{-s}}.
\end{align*}

Finally, collecting the estimate for each part proves the proposition.
\end{proof}

\begin{remark}
With the generalized Sobolev improving estimate proved, one can then prove Theorem \ref{thm2} by following the argument in \cite[Section 4--6]{FGP19}, which relies on the work of \cite{LP09} and \cite{Bourgain88}, and generalizing it from a polynomial $P$ to our $\gamma$. It is routine to check it and we omit the details.

The explicit range $(8/9,1]$ of $\beta$ in Theorem \ref{thm2} is a consequence of the range $(0, 1/18)$ of $s$ in Proposition \ref{s8-prop1}. For any $8/9<\beta\leq 1$ one can apply Proposition \ref{s8-prop1} with a fixed $s=s(\beta)\in((1-\beta)/2, 1/18)$.  Note that such a choice of $s$ implies $\beta+2s>1$ which ensures that the estimations in \cite[Section 4]{FGP19} still work.

The rescaled bump function we used, $\vartheta_{l}(t)=2^{l}\vartheta(2^{l} t)$, is different from what is used in \cite{FGP19}---we have an extra factor $2^{l}$. However, this change is not essential.
\end{remark}


\appendix

\section{H\"ormander's \cite[Theorem 1.1]{Hormander73}} \label{app1}

By tracking the implicit constant in its proof, we have the following form of H\"ormander's \cite[Theorem 1.1]{Hormander73} for $L^2$.

\begin{theorem}\label{app1-1}
Let $\psi\in C_c^{\infty}(\mathbb{R}^2)$, real-valued $\varphi\in C^{\infty}(\mathbb{R}^2)$  and
\begin{equation*}
T_\lambda f(x) = \int_{\mathbb{R}} e^{i\lambda\varphi(x,y)} \psi(x,y) f(y) \,\textrm{d}y, \  f\in C_c^{\infty}(\mathbb{R}), \lambda>0.
\end{equation*}
If
\begin{equation*}
|\partial_{xy}^2 \varphi(x,y)|\geq \mathfrak{c} \ \textrm{in $\supp{\psi}$},
\end{equation*}
then
\begin{equation*}
\|T_\lambda f\|_2 \leq C C_1 |\xsupp\psi|^{1/2} \lambda^{-1/2} \|f\|_2,
\end{equation*}
where $C$ is an absolute number and
\begin{equation*}
\begin{split}
C_1=&\big\{
\mathfrak{c}^{-2}\left(\|\partial_{xx}^2\psi\|_{\infty}\|\psi\|_{\infty}+\|\partial_{x}\psi\|_{\infty}^2\right) +\mathfrak{c}^{-3}\|\psi\|_{\infty}^2\|\partial_{xxxy}^4\varphi\|_{\infty}\\
&+\mathfrak{c}^{-3}\|\partial_{x}\psi\|_{\infty}\|\psi\|_{\infty}\|\partial_{xxy}^3\varphi\|_{\infty}+
\mathfrak{c}^{-4}\|\psi\|_{\infty}^2\|\partial_{xxy}^3\varphi\|_{\infty}^2+\|\psi\|_{\infty}^2\big\}^{1/2}
\end{split}
\end{equation*}
in which we take $\|g\|_{\infty}=\esssup_{x\in \supp\psi}|g(x)|$.
\end{theorem}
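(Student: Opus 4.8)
The plan is to run the classical $TT^*$ proof of Hörmander's theorem and simply record every constant it produces along the way. Since $\|T_\lambda\|_{2\to 2}^2=\|T_\lambda^* T_\lambda\|_{2\to 2}$, I would work with $T_\lambda^* T_\lambda$, whose Schwartz kernel is
\begin{equation*}
L(y,y')=\int_{\mathbb R} e^{i\lambda\left(\varphi(x,y)-\varphi(x,y')\right)}\,\overline{\psi(x,y)}\,\psi(x,y')\,\textrm{d}x,
\end{equation*}
and bound $\|T_\lambda^* T_\lambda\|_{2\to 2}$ by Schur's test, i.e.\ by $\sup_{y}\int|L(y,y')|\,\textrm{d}y'$. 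With $h(x)=\varphi(x,y)-\varphi(x,y')$ and $b(x)=\overline{\psi(x,y)}\psi(x,y')$, the identity $\partial_x h(x)=\int_{y'}^{y}\partial_{xy}^2\varphi(x,s)\,\textrm{d}s$ together with the curvature hypothesis $|\partial_{xy}^2\varphi|\ge\mathfrak c$ on $\supp\psi$ (where $\partial_{xy}^2\varphi$ keeps a fixed sign, as is the case in the applications) gives $|\partial_x h(x)|\ge\mathfrak c\,|y-y'|$ on the $x$-range where $b$ does not vanish. Writing likewise $\partial_{xx}^2 h=\int_{y'}^{y}\partial_{xxy}^3\varphi(x,s)\,\textrm{d}s$ and $\partial_{xxx}^3 h=\int_{y'}^{y}\partial_{xxxy}^4\varphi(x,s)\,\textrm{d}s$ explains why only $x$-derivatives of $\psi$ and only the mixed derivatives $\partial_{xxy}^3\varphi$, $\partial_{xxxy}^4\varphi$ of $\varphi$ will enter the final constant.

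For $|y-y'|$ small I would keep the trivial estimate $|L(y,y')|\le\|\psi\|_\infty^2\,|\xsupp\psi|$. For $|y-y'|$ not small I integrate by parts twice in $x$, rewriting $L(y,y')=(i\lambda)^{-2}\int_{\mathbb R}e^{i\lambda h}\,\mathcal L^2 b\,\textrm{d}x$ with $\mathcal L g=\partial_x\!\left(g/\partial_x h\right)$; the product rule gives
\begin{equation*}
\mathcal L^2 b=\frac{b''}{(h')^2}-\frac{3\,b'h''}{(h')^3}-\frac{b\,h'''}{(h')^3}+\frac{3\,b\,(h'')^2}{(h')^4},
\end{equation*}
where primes denote $\partial_x$. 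Bounding each of the four terms with $|h'|\ge\mathfrak c|y-y'|$, $|h''|\le|y-y'|\,\|\partial_{xxy}^3\varphi\|_\infty$, $|h'''|\le|y-y'|\,\|\partial_{xxxy}^4\varphi\|_\infty$, $|b|\le\|\psi\|_\infty^2$, $|b'|\le 2\|\partial_x\psi\|_\infty\|\psi\|_\infty$, $|b''|\le 2\|\partial_{xx}^2\psi\|_\infty\|\psi\|_\infty+2\|\partial_x\psi\|_\infty^2$, and integrating over $\xsupp\psi$, each term contributes (one of the summands defining $C_1^2$) times $|\xsupp\psi|\,(\lambda|y-y'|)^{-2}$, so $|L(y,y')|\lesssim C_1^2\,|\xsupp\psi|\,(\lambda|y-y'|)^{-2}$ with $C_1$ exactly as stated; the lone $\|\psi\|_\infty^2$ summand inside $C_1^2$ is kept precisely to dominate the trivial bound used in the small-$|y-y'|$ regime.

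Finally I would split $\int|L(y,y')|\,\textrm{d}y'$ at $|y-y'|=\delta$: the near part is $\lesssim\delta\,\|\psi\|_\infty^2\,|\xsupp\psi|$ and the far part is $\lesssim C_1^2\,|\xsupp\psi|\,\lambda^{-2}\delta^{-1}$, so choosing $\delta\asymp C_1\,(\lambda\|\psi\|_\infty)^{-1}$ and using $\|\psi\|_\infty\le C_1$ yields $\sup_y\int|L(y,y')|\,\textrm{d}y'\lesssim C_1^2\,|\xsupp\psi|\,\lambda^{-1}$, hence $\|T_\lambda\|_{2\to 2}\lesssim C_1\,|\xsupp\psi|^{1/2}\,\lambda^{-1/2}$ with an absolute implicit constant. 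I expect no genuine obstacle here; the only point requiring care is the bookkeeping in the expansion of $\mathcal L^2 b$ — checking that every resulting term is majorized by one of the five summands of the stated $C_1^2$, with exactly the listed powers of $\mathfrak c$ and of the various sup-norms, and that the parenthetical convexity-type assumption on $\supp\psi$ is harmless for the intended applications.
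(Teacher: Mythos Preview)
Your proposal is correct and follows essentially the same route as the paper: the paper also studies the $TT^*$ kernel $K_\lambda(y,z)=\int e^{i\lambda(\varphi(x,y)-\varphi(x,z))}\psi(x,y)\overline{\psi(x,z)}\,\textrm{d}x$, uses the trivial bound $|K_\lambda|\le|\xsupp\psi|\,\|\psi\|_\infty^2$ for $\lambda|y-z|\le 1$ and two integrations by parts in $x$ for $\lambda|y-z|>1$, then integrates in $y$ and applies Schur. Your version is more explicit (writing out $\mathcal L^2 b$ and optimizing the cutoff $\delta$), but the argument and the resulting constant $C_1$ are identical.
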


Based on H\"ormander's proof it is easy to observe that the constants come from the estimate of
\begin{equation*}
K_{\lambda}(y,z):=\int_{\mathbb{R}} e^{i\lambda(\varphi(x,y)-\varphi(x,z))} \psi(x,y)\overline{\psi(x,z)} \,\textrm{d}x.
\end{equation*}
On one hand we always have the trivial bound
\begin{equation*}
|K_{\lambda}(y,z)|\leq |\xsupp\psi|\|\psi\|_{\infty}^2.
\end{equation*}
On the other hand when $\lambda|y-z|>1$ integration by parts twice gives
\begin{align*}
|K_{\lambda}(y,z)|\leq &C|\xsupp{\psi}|\left(\lambda|y-z|\right)^{-2}
\big\{\mathfrak{c}^{-2}\left(\|\partial_{xx}^2\psi\|_{\infty}\|\psi\|_{\infty}+\|\partial_{x}\psi\|_{\infty}^2\right) \\
&+\mathfrak{c}^{-3}\|\psi\|_{\infty}^2\|\partial_{xxxy}^4\varphi\|_{\infty}
+\mathfrak{c}^{-3}\|\partial_{x}\psi\|_{\infty}\|\psi\|_{\infty}\|\partial_{xxy}^3\varphi\|_{\infty}\\
&+\mathfrak{c}^{-4}\|\psi\|_{\infty}^2\|\partial_{xxy}^3\varphi\|_{\infty}^2 \big\}.
\end{align*}
Combing these two bounds yield
\begin{equation*}
\int |K_{\lambda}(y,z)|\,\textrm{d}y\leq CC_1^2|\xsupp{\psi}| \lambda^{-1}
\end{equation*}
and the same bound for $\int |K_{\lambda}(y,z)|\,\textrm{d}z$. These lead to the desired bound in the theorem.


\section{$\sigma$-uniformity} \label{app2}

For the convenience of readers we state the third author's \cite[Theorem 6.2]{Li13}.

Let $\sigma\in (0, 1]$, $\textbf{I}\subset\mathbb{R}$ be a fixed bounded interval and
$\mathfrak{U}(\textbf{I})$  a nontrivial subset of $L^2(\textbf{I})$ such that
the $L^2$-norm of every element of $\mathfrak{U}(\textbf{I})$ is uniformly
bounded by a constant. We say that a function $f\in L^2(\textbf{I})$
is \emph{$\sigma$-uniform in $\mathfrak{U}(\textbf{I})$} if
\begin{equation*}
\left|\int_{\textbf{I}} \! f(x) \overline{u(x)} \,\textrm{d}x
\right|\leq \sigma \|f\|_{L^2(\textbf{I})} \quad \textrm{for all
$u\in \mathfrak{U}(\textbf{I})$.}
\end{equation*}

\begin{lemma}\label{app2-1}
Let $\mathscr{L}$ be a bounded sublinear functional from
$L^2(\textbf{I})$ to $\mathbb{C}$, $S_\sigma$ the set of all
functions that are $\sigma$-uniform in $\mathfrak{U}(\textbf{I})$,
\begin{equation*}
A_\sigma = \sup\{|\mathscr{L}(f)|/\|f\|_{L^2(\textbf{I})} : f\in
S_\sigma, f\neq 0\}
\end{equation*}
and
\begin{equation*}
M=\sup_{u\in \mathfrak{U}(\textbf{I})}|\mathscr{L}(u)|.
\end{equation*}
Then
\begin{equation*}
\|\mathscr{L}\|\leq \max\{A_\sigma,  2\sigma^{-1} M\}.
\end{equation*}
\end{lemma}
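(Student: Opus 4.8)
The plan is to prove the bound by a greedy (energy-increment) decomposition of an arbitrary unit vector $f\in L^2(\mathbf{I})$ into a ``structured'' part lying in the span of finitely many elements of $\mathfrak{U}(\mathbf{I})$ plus a small remainder, and to control $\mathscr{L}$ on each piece separately. After normalizing $\|f\|_{L^2(\mathbf{I})}=1$ (the case $f=0$ being trivial), I would set $f_0=f$ and iterate as follows: given $f_k$, stop if $f_k=0$ or if $f_k/\|f_k\|_{L^2(\mathbf{I})}$ is $\sigma$-uniform in $\mathfrak{U}(\mathbf{I})$; otherwise choose $u_{k+1}\in\mathfrak{U}(\mathbf{I})$ with $|\langle f_k,u_{k+1}\rangle|>\sigma\|f_k\|_{L^2(\mathbf{I})}$ and put $f_{k+1}=f_k-\langle f_k,u_{k+1}\rangle\,\|u_{k+1}\|_{L^2(\mathbf{I})}^{-2}\,u_{k+1}$, the orthogonal projection of $f_k$ onto $u_{k+1}^{\perp}$.

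Two elementary observations drive the estimate. First, Cauchy--Schwarz forces $\|u_{k+1}\|_{L^2(\mathbf{I})}>\sigma$, so, writing $t_k:=\|f_k\|_{L^2(\mathbf{I})}$ and using the hypothesis that $\|u\|_{L^2(\mathbf{I})}\le B$ for all $u\in\mathfrak{U}(\mathbf{I})$, one gets $t_{k+1}^2<(1-\sigma^2/B^2)\,t_k^2$, i.e. geometric decay of the $t_k$. Second, from $t_k^2-t_{k+1}^2=|\langle f_k,u_{k+1}\rangle|^2\|u_{k+1}\|_{L^2(\mathbf{I})}^{-2}$ together with $|\langle f_k,u_{k+1}\rangle|>\sigma t_k$ and $t_k+t_{k+1}\le 2t_k$ one obtains the coefficient bound $|\langle f_k,u_{k+1}\rangle|\,\|u_{k+1}\|_{L^2(\mathbf{I})}^{-2}<\sigma^{-1}(t_k^2-t_{k+1}^2)/t_k\le 2\sigma^{-1}(t_k-t_{k+1})$.

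Telescoping the construction gives $f=f_K+\sum_{k=0}^{K-1}\langle f_k,u_{k+1}\rangle\,\|u_{k+1}\|_{L^2(\mathbf{I})}^{-2}\,u_{k+1}$, so sublinearity of $\mathscr{L}$, the definition of $M$, and the second observation yield $|\mathscr{L}(f)|\le|\mathscr{L}(f_K)|+2\sigma^{-1}M\,(t_0-t_K)$. If the process terminates at some step $K$, then $|\mathscr{L}(f_K)|\le A_\sigma t_K$ when $f_K\neq0$ and $|\mathscr{L}(f_K)|=0$ otherwise, hence $|\mathscr{L}(f)|\le A_\sigma t_K+2\sigma^{-1}M(1-t_K)\le\max\{A_\sigma,\,2\sigma^{-1}M\}$ because $t_K\in[0,1]$. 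If it never terminates, the geometric decay gives $t_K\to0$, and since $\mathscr{L}$ is bounded, $|\mathscr{L}(f_K)|\le\|\mathscr{L}\|\,t_K\to0$, so letting $K\to\infty$ gives $|\mathscr{L}(f)|\le 2\sigma^{-1}M$. Taking the supremum over unit $f$ completes the proof.

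I expect the only genuinely delicate points to be: (a) obtaining the \emph{maximum} $\max\{A_\sigma,2\sigma^{-1}M\}$ rather than the cruder sum $A_\sigma+2\sigma^{-1}M$, which is exactly what the convex-combination identity $t_K+(1-t_K)=1$ in the terminating case buys us; and (b) disposing of the non-terminating case, which requires the uniform $L^2$ bound on $\mathfrak{U}(\mathbf{I})$ to force $t_k\to0$ — without that hypothesis the remainder need not shrink. Everything else is routine Hilbert-space bookkeeping.
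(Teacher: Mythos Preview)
Your proof is correct. The paper itself does not prove this lemma; it merely quotes it as \cite[Theorem 6.2]{Li13} for the reader's convenience, so there is no in-paper argument to compare against. Your greedy energy-increment decomposition is exactly the standard proof (and matches the argument in \cite{Li13}): iteratively strip off projections onto elements of $\mathfrak{U}(\textbf{I})$ until the remainder is $\sigma$-uniform, control the coefficients by the telescoping identity $\sum|c_k|\le 2\sigma^{-1}(t_0-t_K)$, and use the convex combination $t_K+(1-t_K)=1$ to obtain the maximum rather than the sum. Your handling of both the terminating and non-terminating cases is clean, and you correctly identify that the uniform $L^2$ bound on $\mathfrak{U}(\textbf{I})$ is what guarantees geometric decay of $t_k$ in the latter case.
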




\end{document}